\theoremstyle{plain}
\newtheorem{theorem}{Theorem}[section]
\newtheorem{proposition}[theorem]{Proposition}
\newtheorem{lemma}[theorem]{Lemma}
\newtheorem{problem}{Problem}
\theoremstyle{definition}
\newtheorem{definition}[theorem]{Definition}
\newtheorem{example}[theorem]{Example}
\newtheorem{remark}[theorem]{Remark}
\theoremstyle{remark}
\numberwithin{equation}{section}
\newcounter{numpar}[section]
\newcounter{numitem}[section]
\newcommand*{\ol}{\overline}
\newcommand*{\dd}{d}     
\newcommand*{\eps}{\varepsilon}
\newcommand*{\ka}{\varkappa}
\newcommand*{\cA}{\mathcal A}
\newcommand*{\cB}{\mathcal B}
\newcommand*{\cE}{\mathcal E}
\newcommand*{\cF}{\mathcal F}
\newcommand*{\cG}{\mathcal G}
\newcommand*{\bbN}{\mathbb N}
\newcommand*{\bbR}{\mathbb R}
\newcommand*{\EE}{\mathsf E}
\newcommand*{\PP}{\mathsf P}
\newcommand*{\PR}{\mathsf{Pr}}
\newcommand*{\QQ}{\mathsf Q}
\newcommand*{\loc}{{\mathrm{loc}}}
\DeclareMathOperator{\Law}{Law}
\begin{document}
\title[Processes embedded in GBM]{Processes
that can be embedded\\in a
geometric Brownian motion}

\author{Alexander Gushchin}
\address{Steklov Mathematical Institute
\and
Higher School of Economics,
Moscow, Russia}
\email{gushchin@mi.ras.ru}

\author{Mikhail Urusov}
\address{University of Duisburg-Essen,
Essen, Germany}
\email{mikhail.urusov@uni-due.de}


\keywords{Geometric Brownian motion;
Skorokhod embedding;
Monroe's theorem}


\begin{abstract}
The main result is a counterpart
of the theorem of Monroe
[\emph{Ann. Probability} \textbf{6} (1978) 42--56]
for a geometric Brownian motion:
A process is equivalent to a time change
of a geometric Brownian motion
if and only if it is a nonnegative supermartingale.
We also provide a link between
our main result and Monroe
[\emph{Ann. Math. Statist.} \textbf{43} (1972) 1293--1311].
This is based on the concept
of a \emph{minimal} stopping time,
which is characterised in Monroe
[\emph{Ann. Math. Statist.} \textbf{43} (1972) 1293--1311]
and Cox and Hobson
[\emph{Probab. Theory Related Fields} \textbf{135} (2006) 395--414]
in the Brownian case.
We finally suggest a sufficient condition for minimality
(for the processes other than a Brownian motion)
complementing the discussion in the aforementioned
papers.
\end{abstract}

\maketitle


\section{Introduction and Main Result}
\label{sec:i}
In his seminal paper
Monroe~\cite{Monroe:78}
proves that a c\`adl\`ag process
is equivalent to a finite time change
of a Brownian motion
if and only if
it is a semimartingale.
Here,
the processes are said to be equivalent
if they have the same law.
We prove a counterpart
of this result for a geometric
Brownian motion:

\begin{theorem}
\label{th:i1}
(i) Let $X=(X_s)_{s\geq0}$
be a nonnegative supermartingale with $\EE X_0\leq1$.
Then there exists a filtered probability space
$(\Omega,\cF,(\cF_t)_{t\geq0},\PP)$,
an $(\cF_t,\PP)$-Brownian motion $W=(W_t)$
and a $[0,\infty]$-valued
$(\cF_t)$-time change $(T_s)$
such that the processes
$(X_s)_{s\geq0}$ and $(Z_{T_s})_{s\geq0}$
have the same law,
where $Z_t=e^{W_t-t/2}$, $t\geq0$.

(ii) Conversely, for any $[0,\infty]$-valued
$(\cF_t)$-time change $(T_s)$,
the process $(Z_{T_s})$ is a nonnegative
$(\cF_{T_s},\PP)$-supermartingale.
\end{theorem}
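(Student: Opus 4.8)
I would treat the two parts separately, beginning with the easier converse (ii).

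For (ii), the plan is to display $Z$ as a nonnegative supermartingale closed at time $\infty$ and then invoke optional sampling. The process $Z_t=e^{W_t-t/2}$ is the stochastic exponential of $W$, hence a nonnegative $(\cF_t,\PP)$-martingale with $Z_0=1$ and $\EE Z_t=1$; since $W_t-t/2\to-\infty$ a.s., we have $Z_t\to0$ a.s., so setting $Z_\infty:=0$ makes $(Z_t)_{t\in[0,\infty]}$ a nonnegative supermartingale on $[0,\infty]$ (it fails to be a martingale precisely because $\EE Z_\infty=0$). The optional sampling theorem for nonnegative supermartingales is valid for arbitrary, possibly infinite, stopping times; applied to $T_s\le T_{s'}$ it gives $\EE[Z_{T_{s'}}\mid\cF_{T_s}]\le Z_{T_s}$ and $\EE Z_{T_s}\le1$. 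Together with nonnegativity this yields that $(Z_{T_s})$ is an $(\cF_{T_s},\PP)$-supermartingale, which is (ii).

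For the harder direction (i), the key preliminary observation is that $0$ is absorbing for any nonnegative supermartingale: if $X_\sigma=0$ for a stopping time $\sigma$, then $0\le\EE[X_{\sigma\vee t}\mid\cF_\sigma]\le X_\sigma=0$ on $\{X_\sigma=0\}$, so $X\equiv0$ after it first vanishes. This matches the target process. By Dambis--Dubins--Schwarz, the continuous martingale $Z$ can be written as $Z_t=\beta_{\la Z\ra_t}$, where $\beta$ is a Brownian motion started at $1$, $\la Z\ra_t=\int_0^tZ_u^2\,du$ increases to $\la Z\ra_\infty=H_0(\beta)$ (the first hitting time of $0$ by $\beta$), and $\beta$ is absorbed at $0$. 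Since $\la Z\ra$ is a continuous, strictly increasing clock, composing with it identifies time changes of $Z$ with time changes of the absorbed Brownian motion $\beta^{\mathrm{abs}}_u:=\beta_{u\wedge H_0}$; hence it suffices to realise $(X_s)$ as $(\beta^{\mathrm{abs}}_{\tilde T_s})$ for some time change $\tilde T_s$.

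The embedding into $\beta^{\mathrm{abs}}$ I would construct by following Monroe's discrete-approximation strategy \cite{Monroe:78}, adapted to the absorbing martingale $\beta^{\mathrm{abs}}$. The feasibility of each step rests on the Skorokhod-embedding fact that, since $\beta^{\mathrm{abs}}$ is a nonnegative martingale converging to $0$, any law on $[0,\infty)$ whose mean is at most the current position is embeddable by a minimal stopping time, the mean deficiency being carried off by absorption at $0$. Using $\EE X_0\le1=\beta_0$, first embed $\Law(X_0)$ by $\tau_0$. On a dyadic grid, given that the values up to the $n$-th grid point have been matched jointly with those of $X$, embed the conditional law of the next value given the past by a further minimal stopping time, using the strong Markov property of $\beta^{\mathrm{abs}}$ to restart from the current position; the supermartingale inequality $\EE[X_{(n+1)/2^k}\mid\cF_{n/2^k}]\le X_{n/2^k}$ together with nonnegativity guarantees this conditional law has mean at most the current position, so such an embedding exists. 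This produces, for each $k$, a time change embedding the dyadic skeleton of $X$ in $\beta^{\mathrm{abs}}$.

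The step I expect to be the main obstacle is the passage to the limit $k\to\infty$: proving tightness of the approximating time changes, identifying the limit as a bona fide $[0,\infty]$-valued $(\cF_t)$-time change, and showing that the finite-dimensional distributions of $(\beta^{\mathrm{abs}}_{\tilde T_s})$ converge to those of $(X_s)$ --- all while controlling the loss of mass at the absorbing boundary $0$. A secondary delicate point is to verify that the Dambis--Dubins--Schwarz reduction via $\la Z\ra$ transports filtrations and the time-change and minimality properties back to the original geometric Brownian motion, so that the resulting $(T_s)$ is an $(\cF_t)$-time change in the sense required by the statement.
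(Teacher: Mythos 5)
Your outline is correct and, at the level of strategy, is essentially the paper's: both arguments run through the same three stages (embed a single law, embed the dyadic skeleton by induction using the strong Markov property and the supermartingale inequality, pass to the continuous-time limit by a tightness argument in the spirit of Monroe), and your treatment of part~(ii) is identical to the paper's one-line proof. The one genuine difference is that you first apply Dambis--Dubins--Schwarz to replace $Z$ by a Brownian motion started at $1$ and absorbed at $0$, whereas the paper works directly in the geometric-Brownian clock and only invokes the clock change $A_t=[Z,Z]_t$ later, in Section~\ref{sec:app}, to relate Theorem~\ref{th:i1} to minimality and to prove Theorem~\ref{th:app4}. Your reduction buys a cleaner picture at the discrete stage: absorption occurs at the finite time $H_0$ rather than at $\tau_k=\infty$, which is exactly the point the paper flags as ``surprisingly technical'' and resolves by conditioning on $\{\tau_k<\infty\}$ and re-verifying the independence of the external randomization. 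The price is that you must transport the time change, the filtration and the stopping-time property back through $A$ (including the enlargement of the space needed by Dambis--Dubins--Schwarz when $A_\infty<\infty$); the paper only performs this transport in the easier direction.

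The place where your proposal rests on an assertion rather than an argument is the single-variable step: that every law on $[0,\infty)$ with mean at most the current position is embeddable in the absorbed Brownian motion by a stopping time $\le H_0$. This is true, but it is precisely the content of Lemma~\ref{lem:p1}, which the paper proves by a new construction (a two-sided exit time from $(\alpha,\beta)$ with levels randomized through an auxiliary uniform variable, the choice $\beta=+\infty$ carrying the mean deficiency off to absorption). Remark~\ref{rem:p1}~(iv) records the route you implicitly have in mind: run the Brownian motion from $1$ until it hits the mean $m$ of the target (staying above $m\ge0$ on the way) and then use a uniformly integrable embedding of the centred law, whose stopped process stays nonnegative because a uniformly integrable martingale with terminal value $\ge -m$ stays $\ge -m$. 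So your assertion can be discharged, but it is the first substantive lemma of the proof and needs to be proved, not cited as folklore. The continuous-time limit you correctly identify as the remaining obstacle; the paper handles it by compactifying the space of time changes via Helly's theorem to obtain tightness of the joint laws on $C([0,\infty])\times\cA$ and then refers to Monroe's identification argument, and nothing in your reduction obstructs that route.
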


Part~(ii) immediately follows from
the optional sampling theorem for
nonnegative supermartingales
applied to $(Z_t)$,
so the task is to prove part~(i).

We follow the usual convention
of working with c\`adl\`ag processes.
In particular, ``supermartingale'' means
``c\`adl\`ag supermartingale''.
Let us recall that a \emph{time change}
is a family $(T_s)_{s\geq0}$ of
stopping times such that the
maps $s\mapsto T_s$
are a.s. nondecreasing and right-continuous.
In contrast to Monroe's~\cite{Monroe:78} setting
the stopping times $T_s$ need not be finite here.
This is natural in our setting because
the nonnegative martingale
$(Z_t)$ has limit $Z_\infty\equiv0$
and is closed as a supermartingale by this
limit.\footnote{An
immediate consequence of Theorem~\ref{th:i1} is
the statement obtained from Theorem~\ref{th:i1}
by replacement of
``nonnegative''
with
``strictly positive''
and ``$[0,\infty]$-valued''
with
``finite''.}

On the one hand,
it is often helpful to know
whether a random process
can be considered as a
time-changed process
with a simple structure.
In finance,
the modelling approach based
on time changes was, in fact,
even inspired by Monroe's theorem,
see~\cite{AneGeman:00}.
Nowadays this modelling approach
is very popular in financial mathematics,
see~\cite{Barndorff-NielsenShiryaev:10}
and the references therein.
On the other hand, Monroe's theorem
is one of the offsprings
of the Skorokhod Embedding Problem
(abbreviated below as the SEP).
The latter was originally formulated and
solved in~\cite{Skorokhod:61}
(English translation in~\cite{Skorokhod:65})
and gave rise to a huge amount of literature.
In~\cite{Obloj:04} one finds a comprehensive
survey of the state of the art to~2004,
in particular, more than twenty different
approaches to solve the SEP
with the relations between them,
different settings and generalisations
as well as some other offsprings.
Skorokhod's motivation for the SEP
was proving limit theorems
(e.g. one can obtain the law of the iterated logarithm
for random walks from that for a Brownian motion),
but in recent years there appeared
also other applications.
The methodology based on
Skorokhod embedding
and pathwise inequalities proved to be
important for finding model-independent
bounds for option prices and
robust hedging strategies.\footnote{Let
us note that robust hedging methods
may often outperform classical
in-model hedging when there is
model ambiguity and/or market frictions,
see~\cite{OblojUlmer:12}.}
That gave rise to further research in this
direction, which continues nowadays, see e.g.
\cite{Hobson:98a},
\cite{BrownHobsonRogers:01},
\cite{CoxHobsonObloj:08},
\cite{CoxObloj:11a},
\cite{CoxObloj:11},
\cite{AcciaioBeiglbockPenknerSchachermayerTemme:13},
\cite{BeiglbockHenry-LaborderePenkner:13},
\cite{CoxWang:13},~\cite{DolinskySoner:14}.
One finds more details
and many further references
in recent surveys on the SEP
and its applications to
robust pricing and hedging
\cite{Hobson:11} and~\cite{Obloj:12}.

In spite of the generality of Monroe's theorem,
we cannot obtain Theorem~\ref{th:i1}
as its consequence
(the reason is described in Section~\ref{sec:app})
and should therefore
prove Theorem~\ref{th:i1} independently.
To prove it
we proceed similarly to Monroe~\cite{Monroe:78},
although some technical details
are elaborated differently,
which is due to natural
differences between the settings.
In the first step, we consider the SEP
for a geometric Brownian motion
(i.e. embedding of a single random variable
in a geometric Brownian motion).
Different solutions to this problem
(in fact, to the one for a Brownian motion
with drift) were proposed in \cite{Hall:69},
\cite{GranditsFalkner:00},
\cite{Peskir:00},
\cite{AnkirchnerHeyneImkeller:08},
\cite{AnkirchnerStrack:11},
and~\cite{AnkirchnerHobsonStrack:14}.
In Section~\ref{par:prv},
we suggest an alternative construction,
which is, in our view,
of interest in its own right.
Also this construction
will be convenient in Section~\ref{par:pdt}.
In the literature, there are already explicit
embeddings in time-homogeneous diffusions,
see~\cite{PedersenPeskir:01},
\cite{CoxHobson:04},
\cite{AnkirchnerHobsonStrack:14},
Section~9 in~\cite{Obloj:04},
Section~4.3 in~\cite{Hobson:11}
and the references therein.
However, to the best of our knowledge,
the construction that we present
in Section~\ref{par:prv}
did not appear in the papers on the subject,
although the ideas behind it
are of course present in the literature.
Most notably,
our construction can be viewed as a rework
of the original Skorokhod's construction
(see \cite{Skorokhod:65}
or Section~3.12 in~\cite{Hobson:11})
for the case of a geometric Brownian motion.
In the second step, we embed
discrete-time supermartingales
in a geometric Brownian motion.
Typically, if it is known how to embed
one random variable,
there is no problem to embed
a discrete-time process.
However, in our situation,
this step turns out
to be surprisingly technical.
The reason is that the time change
is allowed to take infinite value,
see Section~\ref{par:pdt} for more details.
In the third step, we justify a passage
to the continuous-time limit.
This part is closer to the
corresponding part of the proof
of Monroe's theorem,
and we, in fact, just refer
to~\cite{Monroe:78} at some point
(see Section~\ref{par:pct}).
In Section~\ref{sec:app},
we discuss some issues
related to \emph{minimal stopping times}
for a Brownian motion,
the concept studied in another
Monroe's paper~\cite{Monroe:72}
and taken on in many subsequent works
on the SEP and its offsprings.
In particular, we explain
why Theorem~\ref{th:i1}
is not a consequence of~\cite{Monroe:72}
and~\cite{Monroe:78}
and provide in Theorem~\ref{th:app4}
an equivalent formulation
of Theorem~\ref{th:i1},
which complements
the discussion in~\cite{Monroe:72}.
For a Brownian motion,
minimality is characterised in~\cite{Monroe:72}
and, in a more general situation,
in~\cite{CoxHobson:06}.
In Section~\ref{sec:mst},
we study minimal stopping times
for other processes.
Namely, in Theorem~\ref{th:mst1},
we give a sufficient condition for minimality,
which is new
and complements the discussion of minimal
stopping times for processes other than
a Brownian motion
in Section~8 in~\cite{Obloj:04},
Section~3.4 in~\cite{Hobson:11}
and Section~2.2 in~\cite{Obloj:12}.
We will see that Theorem~\ref{th:mst1}
applies in many specific situations.

Let us finish the introduction by discussing
the embedding in the process
$\ol Z^{a,b}_t=e^{aW_t+bt}$, $t\geq0$,
where $a\ne0$, $b\in\bbR$.
First let $b\ne0$, hence the
(possibly infinite) limit
$\ol Z^{a,b}_\infty
:=\lim_{t\to\infty}\ol Z^{a,b}_t$
is well-defined, i.e. it is natural to
consider $[0,\infty]$-valued time changes.
Then Theorem~\ref{th:i1} implies that
a c\`adl\`ag process $\ol X$
is equivalent to a $[0,\infty]$-valued
time change of $\ol Z^{a,b}$
if and only if $\ol X^\lambda$
is a nonnegative supermartingale with
$\EE\ol X_0^\lambda\leq1$,
where $\lambda=-\frac{2b}{a^2}$.
Note that if $b>0$,
then $\ol X$ is allowed to take value $+\infty$.
Let now $b=0$.
Since $\lim_{t\to\infty}\ol Z^{a,0}_t$
does not exist,
it is now natural to consider
only finite time changes.
Then Monroe's theorem implies that
a c\`adl\`ag process $\ol X$
is equivalent to a finite time change
of $\ol Z^{a,0}$ if and only if
it is a strictly positive semimartingale.

\section{Proof of Theorem~\protect\ref{th:i1}}
\label{sec:p}
\subsection{Embedding of a Single Random Variable}
\label{par:prv}
We will use the notation $\mu_W$
for the Wiener measure on
$(C(\bbR_+),\cB(C(\bbR_+)))$
and $\mu_L$ for the Lebesgue measure
on $([0,1],\cB([0,1]))$.
For some random variables $\xi$ and $\eta$,
we write $\xi\sim\eta$ to express that
$\xi$ and $\eta$ have the same law.

\begin{lemma}
\label{lem:p1}
Let $\xi$
be a nonnegative random variable with $\EE \xi\leq1$. Consider
the filtered probability space
$(\Omega,\cF,(\cF_t)_{t\geq0},\PP)$ with
$$
\Omega=C(\bbR_+)\times [0,1],
\quad
\cF=\cB(C(\bbR_+))\otimes \cB([0,1]),
\quad
\PP=\mu_W\times\mu_L,
$$
and
$\cF_t = \bigcap_{\eps>0}\sigma(R,B_s;s\in[0,t+\eps])$,
where
the random variable $R$
and
the process $B=(B_t)$
on $\Omega$
are defined as follows:
for $\omega=(x,r)$,
$R(\omega):=r$, $B_t(\omega):=x(t)$.
In particular,
$R$ is $\cF_0$-measurable
and uniformly distributed on $[0,1]$,
and $B$ is an $(\cF_t,\PP)$-Brownian motion.
Then there exists a $[0,\infty]$-valued
$(\cF_t)$-stopping time $\tau$
such that $\xi\sim Y_\tau$,
where $Y_t=e^{B_t-t/2}$, $t\geq0$.
\end{lemma}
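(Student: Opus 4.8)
The plan is to embed a single nonnegative random variable $\xi$ with $\EE\xi\le1$ into the geometric Brownian motion $Y_t=e^{B_t-t/2}$ by reworking Skorokhod's classical construction. Recall that $(Y_t)$ is a nonnegative local martingale (indeed a true martingale) started at $Y_0=1$, with $Y_t\to0$ a.s. as $t\to\infty$. The first observation I would exploit is that a hitting-time argument for geometric Brownian motion is clean: for a level $c>0$, the stopping time $\sigma_c=\inf\{t:Y_t=c\}$ has $Y_{\sigma_c}=c$, and because $Y$ is a nonnegative martingale converging to $0$, for $c>1$ the time $\sigma_c$ may be infinite with positive probability, while the optional sampling / gambler's-ruin computation gives the exact probability of reaching an upper level before drifting to $0$. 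Concretely, for $0<a<1<b$ the exit probabilities from the interval $(a,b)$ are governed by the fact that $Y$ itself is a martingale, so $\PP(Y \text{ hits } b \text{ before } a)=(1-a)/(b-a)$ after starting at $1$; this harmonic (here, linear in the value of $Y$) structure is exactly what makes Skorokhod's two-point embedding work.

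The main steps I would carry out are as follows. \emph{Step 1 (two-point embeddings).} For a random variable taking two values $a<b$ with the correct mean constraint, I would use the first exit time of $Y$ from the interval $(a,b)$; since $Y$ is a martingale and $Y_0=1$, the mean-preservation pins down the exit distribution to match a two-valued target, with the caveat that one of the ``levels'' may be $0$, reached only in the limit $t\to\infty$, which is where the $[0,\infty]$-valued nature of $\tau$ enters. \emph{Step 2 (general $\xi$ via randomisation).} For a general law, I would use the independent uniform randomiser $R$ furnished in the statement to select, in Skorokhod's manner, a random pair of levels whose associated two-point exit embedding, averaged over $R$, reproduces the law of $\xi$. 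The uniform variable $R$ being $\cF_0$-measurable and independent of $B$ is precisely the device that lets one build a single stopping time $\tau$ out of a continuum of two-point embeddings. \emph{Step 3 (handling the mass escaping to $0$ and the inequality $\EE\xi\le1$).} Since $\EE\xi$ may be strictly less than $1$ while $\EE Y_t=1$ for finite $t$, the defect $1-\EE\xi$ must be absorbed by routing the appropriate amount of probability mass to the value $0=Y_\infty$; concretely, part of the randomisation should send $\tau=\infty$, on which event $Y_\tau:=\lim_{t\to\infty}Y_t=0$, so that $\xi\sim Y_\tau$ including possible atoms at $0$.

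The hard part will be \emph{Step 3}, namely organising the escape of mass to the value $0$ so that $\tau$ is a genuine $[0,\infty]$-valued stopping time and $Y_\tau$ reproduces $\xi$ exactly, including its atom (if any) at $0$ and the slack coming from $\EE\xi<1$. In Skorokhod's original Brownian setting every target mass is placed at a finite level and the embedding time is finite; here the transience of $Y$ toward $0$ both helps (it supplies a natural place for the ``missing'' mass $1-\EE\xi$) and complicates the bookkeeping, because the linear exit probabilities $(1-a)/(b-a)$ degenerate as $a\to0$. I would therefore set up the construction so that levels $a=0$ are admissible with the convention $\sigma_0=\inf\{t:Y_t=0\}=\infty$ (a.s.\ not attained in finite time), verify measurability of the resulting $\tau$ with respect to $(\cF_t)$ using the right-continuity built into the definition of $\cF_t$, and finally check by a direct distributional computation that $Y_\tau\sim\xi$. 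Once the two-point building blocks and the randomisation by $R$ are in place, this last verification should reduce to matching one-dimensional marginals, which is routine.
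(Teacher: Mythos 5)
Your overall strategy is the same as the paper's: embed $\xi$ by the first exit time of $Y$ from a random interval $(\alpha,\beta)$ with $0\le\alpha\le1\le\beta\le\infty$, the pair being chosen as a function of the $\cF_0$-measurable uniform variable $R$, and exploit the martingale property of $Y$ for the two-point exit law together with $Y_t\to0$ for the levels that are only reached ``at infinity''. However, the proposal stops exactly where the work begins: you never construct the joint law of $(\alpha,\beta)$. Saying that one should select the levels ``in Skorokhod's manner'' so that the mixture of two-point exit laws reproduces the law of $\xi$ is precisely the assertion to be proved, and Skorokhod's randomisation does not transfer verbatim: here the balance condition is that the two-point exit law on $\{\alpha,\beta\}$ has mean $1$ (not $0$), the target need not be centred at $1$, and part of the distribution must be paired with $\beta=+\infty$. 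The paper produces the kernel explicitly: with $F^{-1}$ the quantile function, $h(r)=\int_0^r F^{-1}(s)\,\dd s$ and $g(r)=r-h(r)$, the two solutions $\theta_1<\theta_2$ of $g(x)=c$ for $c\in[g(1),g^*)$ are paired into $(\alpha,\beta)=(F^{-1}(\theta_1),F^{-1}(\theta_2))$, the single solution for $c<g(1)$ gets $\beta=+\infty$, and the mean-one property $\EE(\xi\,|\,g(R))=1$ on $\{g(R)\ge g(1)\}$ is verified by a short computation with $h$. Some such explicit construction (or an existence argument of comparable substance) is indispensable.

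There is also a conceptual error in your Step 3. The deficit $1-\EE\xi$ is \emph{not} in general absorbed by sending $\tau=\infty$ and routing mass to the value $0$. Take $\xi\equiv\tfrac12$: the only possible embedding is $\tau=\inf\{t:Y_t=\tfrac12\}$, which is a.s.\ finite, and no mass sits at $0$; the deficit arises because $\beta=+\infty$ makes the stopped martingale $Y^\tau$ fail to be uniformly integrable, so $\EE Y_\tau=\tfrac12<1$ even though $\tau<\infty$ a.s. In general, whenever $\EE\xi<1$ a whole lower portion of the quantiles of $\xi$ (not just a possible atom at $0$) must be paired with $\beta=+\infty$ and embedded by pure downward hitting of a finite positive level; $\tau=\infty$ occurs only on the event $\{\alpha=0\}$, i.e.\ only to produce an atom of $\xi$ at $0$. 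Your bookkeeping for where the ``missing mass'' goes would therefore not close as written, although the surrounding plan is sound and coincides with the paper's once the randomisation is pinned down.
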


Let us remark at this point
that the converse, obviously, holds as well,
i.e. a random variable
can be embedded in
a geometric Brownian motion
if and only if
it is nonnegative
and its expectation is less than
or equal to one.

\begin{proof}
Let $F$ denote
the distribution function of $\xi$
and let the quantile function
$F^{-1}\colon[0,1]\to[0,+\infty]$
be defined as the right-continuous inverse of $F$,
i.e. $F^{-1}(r)=\inf\{x\in\bbR_+\colon F(x)>r\}$;
here and below $\inf\emptyset=+\infty$.
It is well known that $F^{-1}(R)$
has the same distribution as $\xi$.
Therefore,
below we assume
without loss of generality
that $\xi=F^{-1}(R)$.

Let us set $h(r)=\int_0^r F^{-1}(s)\,\dd s$,
$g(r)=r-h(r)$, $r\in[0,1]$.
Then $h$ is a nondecreasing convex function on $[0,1]$,
$h(0)=0$, $h(1)=\EE\xi\leq1$.
If $h(1)<1$, the equation
$g(x)=c$ for $0\leq c<g(1)$
has exactly one solution, say,
$\theta=\theta(c)\in[0,1]$.
For such $\theta$,
we put $U(\theta)=F^{-1}(\theta)$,
$V(\theta)=+\infty$.
If $g(1)\leq c < g^*:=\max_{r\in[0,1]}g(r)$,
the same equation has two solutions
$\theta_1<\theta_2$ in $[0,1]$.
For such $\theta_1$ and $\theta_2$,
we put $U(\theta_1)=U(\theta_2)=F^{-1}(\theta_1)$,
$V(\theta_1)=V(\theta_2)=F^{-1}(\theta_2)$.
For $\theta\in[0,1]$ such that $g(\theta)=g^*$,
we put $U(\theta)=V(\theta)=1$.
We thus defined the functions
$U\colon[0,1]\to[0,1]$
and $V\colon[0,1]\to[1,+\infty]$.
Finally, let us introduce the random variables
$\eta:=g(R)$, $\alpha:=U(R)$ and $\beta:=V(R)$.
Note that $\alpha$ and $\beta$ are, in fact,
functions of~$\eta$.
In Figure~\ref{fig:p1}
we explain the structure
of the random variables
$\xi$, $\alpha$ and $\beta$
via the graphs of the functions
$h$ and~$g$.

\begin{figure}[htb!]
\includegraphics[width=.49\textwidth]{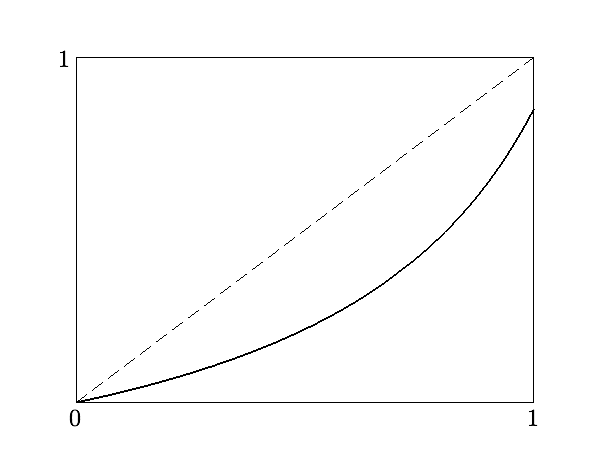}
\includegraphics[width=.49\textwidth]{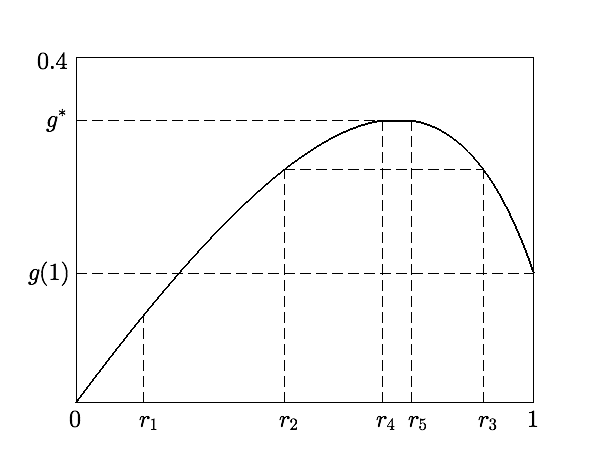}
\caption{In the figure on the left,
the solid line is the graph of the function
$r\mapsto h(r)$,
the dashed line is that of
the identity function $r\mapsto r$.
The relation with the structure
of the random variable $\xi$
is explained by the formula
$\xi=F^{-1}(R)=h'_+(R)$,
where $h'_+$
denotes the right derivative
of the convex function~$h$.
In the figure on the right,
the solid line is the graph
of the function $r\mapsto g(r)$.
The structure of the random variables
$\alpha$ and $\beta$
can be explained as follows:
if $R=r_1$, then $\alpha=F^{-1}(r_1)=\xi$
and $\beta=+\infty$;
if $R\in\{r_2,r_3\}$, then
$\alpha=F^{-1}(r_2)$ and
$\beta=F^{-1}(r_3)$;
if $R\in[r_4,r_5]$, then $\alpha=\beta=1$.}
\label{fig:p1}
\end{figure}

The key point of our construction
is the following characterisation
of the conditional law of $\xi$ given~$\eta$:

\smallskip
(a) A.s. on the event $\{\eta<g(1)\}$
it is concentrated on the one-point set $\{\xi\}$
(note that $\xi$ is a function of $\eta$ on this event
because $R$ and $\eta$ are in a one-to-one
correspondence on $\{\eta<g(1)\}$);

(b) A.s. on the event $\{\eta\geq g(1)\}$
it is concentrated on the set $\{\alpha,\beta\}$
and, moreover,
\begin{equation}
\label{eq:p1}
\EE(\xi|\eta)=1\text{ a.s. on }\{\eta\geq g(1)\},
\end{equation}
which determines the conditional law of $\xi$
given $\eta$ in a unique way.

\smallskip
To prove~\eqref{eq:p1},
it is sufficient to check that
for any interval $(a,b)\subset[g(1),g^*]$,
it follows
$\EE\xi 1_{\{\eta\in(a,b)\}}
=\PP\bigl(\eta\in(a,b)\bigr)$.
We have
$\{\eta\in(a,b)\}
=\{g(R)\in(a,b)\}
=\{R\in(r_0,r_1)\cup(r_2,r_3)\}$
with $g(r_0)=g(r_3)=a$
and $g(r_1)=g(r_2)=b$,
therefore,
\begin{align*}
\EE\xi 1_{\{\eta\in(a,b)\}}
&=\EE F^{-1}(R) 1_{\{g(R)\in(a,b)\}}
=(h(r_1)-h(r_0))+(h(r_3)-h(r_2))\\
&=(r_1-r_0)+(r_3-r_2)
=\PP\bigl(R\in(r_0,r_1)\cup(r_2,r_3)\bigr)
=\PP\bigl(\eta\in(a,b)\bigr)
\end{align*}
(recall the definitions of the functions $g$ and~$h$).
The other statements in (a) and~(b) above
are clear.

Now we define $\tau$ by the formula
$$
\tau=\inf\{t\in\bbR_+: Y_t\notin(\alpha,\beta)\}.
$$
Since the random variables $\alpha$
and $\beta$ are $\cF_0$-measurable,
$\tau$ is an $(\cF_t)$-stopping time.
Let us prove that the conditional law of $Y_\tau$
given $\eta$ admits the following characterisation:

\smallskip
(A) A.s. on the event $\{\eta<g(1)\}$
it is concentrated on the one-point set $\{\xi\}$;

(B) A.s. on the event $\{\eta\geq g(1)\}$
it is concentrated on the set $\{\alpha,\beta\}$
and, moreover,
\begin{equation}
\label{eq:p2}
\EE(Y_\tau|\eta)=1\text{ a.s. on }\{\eta\geq g(1)\}.
\end{equation}

\smallskip
Indeed, if $\PP(\eta<g(1))>0$,
then on $\{\eta<g(1)\}$
it holds $\beta=\infty$ and,
since $\lim_{t\to\infty} Y_t=0$ a.s.,
we have $Y_\tau=\alpha=F^{-1}(R)=\xi$
on this event
(the case $\alpha=0$, where $\tau=\infty$, is included).
The first statement in~(B) is clear.
It remains to check~\eqref{eq:p2}.
Let us take $r\in[0,1]$
such that $g(r)\geq g(1)$.
Since $V(r)<\infty$ for such $r$, the process
$\big(Y(\cdot,r)_{t\wedge\tau(\cdot,r)}\big)$
is a bounded martingale on $C(\bbR_+)$
with respect to the coordinate filtration
and the Wiener measure~$\mu_W$, i.e.
$\int_{C(\bbR_+)}Y(x,r)_{\tau(x,r)}\mu_W(dx)=1$,
which means that
$\EE(Y_\tau|R)=1$
a.s. on $\{\eta\geq g(1)\}$.
Statement~\eqref{eq:p2} now follows
by the tower property of conditional expectations.

Comparing (a),~(b) and (A),~(B) above
we obtain that the conditional laws
of $\xi$ and $Y_\tau$ given $\eta$ coincide.
Hence, their unconditional laws coincide.
This concludes the proof.
\end{proof}

\begin{remark}
\label{rem:p1}
(i) At first glance it might seem
tempting to prove Lemma~\ref{lem:p1}
via a construction like
Doob's construction
for embedding in a Brownian motion
(see the paragraph following
Problem~\ref{pr:app1}
in Section~\ref{sec:app}).
However, this does not work
because $Y$ is transient.
Namely, for the stopping time $\tau$
defined similarly to~\eqref{eq:app1},
we shall typically have
$Y_\tau\neq f(Y_1)\sim\xi$.

(ii) The proof of Lemma~\ref{lem:p1}
provides an alternative solution
to the Skorokhod embedding problem
for a geometric Brownian motion.
It is reminiscent of Hall's solution~\cite{Hall:69},
although qualitatively different from it.
Both in~\cite{Hall:69}
and in the proof above
the stopping time is constructed
as the hitting time of two levels,
$\alpha$ and $\beta$,
that are obtained via a randomization.
However, these randomizations are very different.
For instance,
if the law of $\xi$ has no atoms,
then $\beta$ in our construction
is always a deterministic function
of $\alpha$,
while in~\cite{Hall:69}
the random vector $(\alpha,\beta)$
is ``genuinely two-dimensional'',
i.e. the conditional distribution
of $\beta$ given $\alpha$
is nondegenerate.

(iii) The construction in the proof
of Lemma~\ref{lem:p1} appeared
earlier in statistical context
in~\cite{Birnbaum:61}
in the proof that each binary experiment
is equivalent to a mixture of strictly ordered
simple binary experiments.
Here we tailored the construction to our
situation and found a short proof via~\eqref{eq:p1}.

(iv) Other solutions to the SEP for a geometric Brownian motion or, equivalently, for a Brownian motion with drift were obtained in a number of papers mentioned in the introduction. One more way to construct required embeddings is to reduce the problem to the SEP for a Brownian motion and non-centred target distributions via change of time, see the explanation in Section~\ref{sec:app}. Then one can use the solution proposed by Cox~\cite{Cox:08} or just run the Brownian motion until it hits the mean of the distribution and then use a solution in the centred case.

(v) For the sequel, let us remark
that the levels $\alpha$ and $\beta$
in the proof of Lemma~\ref{lem:p1}
depend on an external
uniformly distributed on $[0,1]$
random variable $R$
and on the distribution $F$
we want to embed:
with a slight abuse of notation
we will write
$\alpha=\alpha(F,R)$,
$\beta=\beta(F,R)$
(recall Figure~\ref{fig:p1}
and observe that the functions
$h$ and $g$ are constructed via~$F$).
\end{remark}

\subsection{Embedding of a Discrete-Time Supermartingale.}
\label{par:pdt}
Using Lemma~\ref{lem:p1}
we now embed a nonnegative
discrete-time supermartingale
in a geometric Brownian motion.
Let
$\bbN_0:=\bbN\cup\{0\}$.
We will also use the notation
$\Law_\PP(\xi)$
(resp. $\Law_\PP(\xi|\eta)$
\emph{or} $\Law_\PP(\xi|\cG)$)
for the law of $\xi$ under~$\PP$
(resp. for the conditional law of $\xi$
given $\eta$ \emph{or}
given $\cG$ under~$\PP$)
whenever $\xi$ and $\eta$
are random elements
and $\cG$ is a sub-$\sigma$-field
on some probability space
$(\Omega,\cF,\PP)$.

\begin{lemma}
\label{lem:p2}
Let $X=(X_n)_{n\in\bbN_0}$
be a nonnegative supermartingale
with $\EE X_0\leq1$.
Consider
the filtered probability space
$(\Omega,\cF,(\cF_t)_{t\geq0},\PP)$ with
$$
\Omega=C(\bbR_+)\times [0,1]^{\bbN_0},
\quad
\cF=\cB(C(\bbR_+))\otimes \cB([0,1])^{\otimes\bbN_0},
\quad
\PP=\mu_W\times\mu_L^{\bbN_0},
$$
and
$\cF_t = \bigcap_{\eps>0}\sigma
(R_n,B_s;n\in\bbN_0,s\in[0,t+\eps])$,
where
the random variables $R_n$
and
the process $B=(B_t)$
on $\Omega$
are defined as follows:
for $\omega=(x,r_0,r_1,\ldots)$,
$R_n(\omega):=r_n$, $B_t(\omega):=x(t)$.
In particular,
$R_n$, $n\in\bbN_0$,
are independent $\cF_0$-measurable
uniformly distributed on $[0,1]$ random variables,
and $B$ is an $(\cF_t,\PP)$-Brownian motion
(note that independence of
$B$ and $(R_n)$ is included
in this statement).
Then there exists
a nondecreasing family $(\tau_n)$
of $[0,\infty]$-valued
$(\cF_t)$-stopping times
such that the processes
$(X_n)$ and $(Y_{\tau_n})$ have the same law,
where $Y_t=e^{B_t-t/2}$, $t\geq0$.
\end{lemma}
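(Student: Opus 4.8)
The plan is to build $(\tau_n)$ recursively, embedding the coordinates of $X$ one at a time and using the strong Markov property of $Y$ to restart a fresh geometric Brownian motion after each $\tau_n$; at every step the single-variable embedding of Lemma~\ref{lem:p1} is applied in a conditional (parametrised) form. Since only the law of $X$ enters the statement, I would take $X$ canonical and fix, for each $n$, a regular conditional distribution $Q_n((x_0,\ldots,x_n);\cdot)$ of $X_{n+1}$ given $(X_0,\ldots,X_n)$. The supermartingale property then reads $\int z\,Q_n((x_0,\ldots,x_n);dz)\le x_n$, and since a nonnegative supermartingale is absorbed at $0$ one has $Q_n((x_0,\ldots,x_n);\cdot)=\delta_0$ whenever $x_n=0$.

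First I would set $\tau_0$ to be the stopping time of Lemma~\ref{lem:p1} applied to the target $\Law_\PP(X_0)$ with randomiser $R_0$, so that $Y_{\tau_0}\sim X_0$. Assume $\tau_0\le\cdots\le\tau_n$ have been built with $(Y_{\tau_0},\ldots,Y_{\tau_n})\sim(X_0,\ldots,X_n)$, and write $\cG_n:=\sigma(Y_{\tau_0},\ldots,Y_{\tau_n})$. On $\{\tau_n=\infty\}$, equivalently $\{Y_{\tau_n}=0\}$, I put $\tau_{n+1}:=\infty$. On $\{\tau_n<\infty\}$ let $F_{n+1}$ be the distribution function of the law $Q_n((Y_{\tau_0},\ldots,Y_{\tau_n});Y_{\tau_n}\,\cdot\,)$, which is $\cG_n$-measurable and has mean $\le1$; using the functions of Remark~\ref{rem:p1}(v) set $\alpha_{n+1}:=\alpha(F_{n+1},R_{n+1})$, $\beta_{n+1}:=\beta(F_{n+1},R_{n+1})$ and
\[
\tau_{n+1}:=\inf\{t\ge\tau_n:\ Y_t\notin(Y_{\tau_n}\alpha_{n+1},\,Y_{\tau_n}\beta_{n+1})\}.
\]
As the levels are $\cF_{\tau_n}$-measurable, $\tau_{n+1}$ is an $(\cF_t)$-stopping time and $\tau_{n+1}\ge\tau_n$.

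The core step is to verify that, conditionally on $\cG_n$, the variable $Y_{\tau_{n+1}}$ has law $Q_n((Y_{\tau_0},\ldots,Y_{\tau_n});\cdot)$. On $\{\tau_n<\infty\}$ the strong Markov property makes $\wt Y_u:=Y_{\tau_n+u}/Y_{\tau_n}$ a geometric Brownian motion started at $1$ and independent of $\cF_{\tau_n}$, hence of $(\cG_n,R_{n+1})$; and $R_{n+1}$ is independent of $\cG_n$ by the product structure of $\PP$. Together these yield that $\wt Y$, $R_{n+1}$ and $\cG_n$ are mutually independent, so that conditionally on $\cG_n$ the pair $(R_{n+1},\wt Y)$ keeps its unconditional law while the target $F_{n+1}$ is frozen to a deterministic distribution function. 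This is precisely the situation of Lemma~\ref{lem:p1} read fibre-wise, and since $Y_{\tau_{n+1}}/Y_{\tau_n}$ is the exit value of $\wt Y$ from $(\alpha_{n+1},\beta_{n+1})$, its conditional law given $\cG_n$ equals $F_{n+1}$; rescaling by $Y_{\tau_n}$ gives the claim. A standard identity for regular conditional distributions then turns ``equal one-step kernels plus equal law of the past'' into $(Y_{\tau_0},\ldots,Y_{\tau_{n+1}})\sim(X_0,\ldots,X_{n+1})$, closing the induction and identifying the laws of the processes $(Y_{\tau_n})$ and $(X_n)$.

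The hard part, and the reason this is more technical than the finite-time Brownian case, is the coherent handling of the infinite values of the time change. One must check that the branch $\beta_{n+1}=\infty$ of Lemma~\ref{lem:p1} --- which occurs exactly when the rescaled target puts mass below its mean, i.e. when the supermartingale loses mass or the conditional law has an atom at $0$ --- produces $\tau_{n+1}=\infty$ with $Y_{\tau_{n+1}}=\lim_{t}Y_t=0$ there, consistently with the absorption of $X$ at $0$ noted above. Keeping this bookkeeping coherent across all the (infinitely many) steps, so that the absorbing set $\{\tau_n=\infty\}$ grows together with $\{X_n=0\}$ and the Markovian restart is invoked only on the surviving set $\{\tau_n<\infty\}$, together with the joint measurability of $(F,R)\mapsto(\alpha(F,R),\beta(F,R))$ that legitimises the fibre-wise use of Lemma~\ref{lem:p1}, is where the genuine effort goes.
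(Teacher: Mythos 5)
Your proposal is correct and follows essentially the same route as the paper's proof: recursive embedding of the conditional law of the ratio $X_{k+1}/X_k$ via Lemma~\ref{lem:p1} applied fibre-wise after restarting the geometric Brownian motion at $\tau_k$ by the strong Markov property, with $\tau_{k+1}:=\infty$ on the absorbing set $\{\tau_k=\infty\}$. The bookkeeping you flag as the genuine effort (independence of $(R_m)_{m\ge k+1}$ from the past, and working on $\{\tau_k<\infty\}$, which the paper formalises via the conditional measure $\QQ=\PP(\cdot\mid\tau_k<\infty)$) is exactly what the paper's proof carries out in detail.
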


\begin{proof}
Let $F_0$ denote the distribution function of $X_0$.
Consider the $(\cF_t)$-stopping time
$$
\tau_0=\inf\big\{t\in\bbR_+:
Y_t\notin\big(\alpha(F_0,R_0),\beta(F_0,R_0)\big)\big\},
$$
where the notations $\alpha(F_0,R_0)$
and $\beta(F_0,R_0)$
are introduced in Remark~\ref{rem:p1}~(v).
By Lemma~\ref{lem:p1},
the random variables $X_0$
and $Y_{\tau_0}$
have the same law.
For the sequel let us also observe
that the family $(R_n)_{n\geq1}$
is independent of
$\sigma(\tau_0,Y_t;\,t\geq0)$
under~$\PP$.

We proceed by induction.
The induction hypothesis is as follows.
For some $k\in\bbN_0$,
we constructed a nondecreasing
family $(\tau_n)_{0\leq n\leq k}$
of $(\cF_t)$-stopping times
such that
\begin{equation}
\label{eq:p3}
\Law_\PR(X_0,\ldots,X_k)
=\Law_\PP(Y_{\tau_0},\ldots,Y_{\tau_k})
\end{equation}
(here and below $\PR$
denotes the probability measure
on the space,
where the sequence $(X_n)$ is defined)
and that
\begin{equation}
\label{eq:p4}
(R_n)_{n\geq k+1}
\text{ is independent of }
\sigma\big(Y_t,\tau_n;\,t\geq0,0\leq n\leq k\big)
\text{ under }
\PP.
\end{equation}
We need to construct
an $(\cF_t)$-stopping time
$\tau_{k+1}\geq\tau_k$
such that \eqref{eq:p3}
and~\eqref{eq:p4}
hold with $k$
replaced by~$k+1$.

In what follows we will work
with the random variables like
$\frac{X_{k+1}}{X_k}$
employing the convention
$\frac00:=1$
(note that $X_{k+1}=0$
$\PR$-a.s. on the set $\{X_k=0\}$
because $(X_n)$ is a nonnegative
supermartingale).
Let us remark that
$\EE_\PR\big(\frac{X_{k+1}}{X_k}\big|
X_0,\ldots,X_k\big)\leq1$ $\PR$-a.s.
The idea is now to embed
$\Law_\PR\big(\frac{X_{k+1}}{X_k}\big|
X_0,\ldots,X_k\big)$
via Lemma~\ref{lem:p1}
in the geometric Brownian motion
$({Y_{t+\tau_k}}/{Y_{\tau_k}})$,
but this requires some additional technical
work because $\tau_k$
may take infinite value.

Let us consider the regular
conditional distribution function
$F_{k+1}=(F_{k+1}(x|x_0,\ldots,
x_k))_{x,x_0,\ldots,x_k\in\bbR_+}$
for the random variable
$\frac{X_{k+1}}{X_k}$
given $X_0,\ldots,X_k$.
Namely, for each
$x_0,\ldots,x_k\in\bbR_+$,
$F_{k+1}(\cdot|x_0,\ldots,x_k)$
is a distribution function
of a probability measure on $\bbR_+$;
for each $x\in\bbR_+$,
$F_{k+1}(x|\cdot)$ is a Borel function
on $\bbR_+^{k+1}$,
and the random variable
$F_{k+1}(x|X_0,\ldots,X_k)$
is a version of the conditional probability
$\PR\big(\frac{X_{k+1}}{X_k}\leq x\big|
X_0,\ldots,X_k\big)$.
We define $\tau_{k+1}$
by the formula
$$
\tau_{k+1}=\tau_k+\inf\left\{t\in\bbR_+:
\frac{Y_{t+\tau_k}}{Y_{\tau_k}}\notin\big(
\alpha(F_{k+1}
(\cdot|Y_{\tau_0},\ldots,Y_{\tau_k}),R_{k+1}),
\beta(F_{k+1}
(\cdot|Y_{\tau_0},\ldots,Y_{\tau_k}),R_{k+1})
\big)\right\}
$$
($\tau_{k+1}:=\infty$ on the event $\{\tau_k=\infty\}$),
which is an $(\cF_t)$-stopping time
because $R_{k+1}$ is $\cF_0$-measurable
and $F_{k+1}(\cdot|Y_{\tau_0},\ldots,Y_{\tau_k})$
is known at time $\tau_k$.
Let us note that \eqref{eq:p4}
with $k$ replaced by $k+1$
follows from the formula for $\tau_{k+1}$,
\eqref{eq:p4} and the fact
that $R_{k+1},R_{k+2},\ldots$
are independent under~$\PP$.
It remains to prove that
\begin{equation}
\label{eq:p5}
\Law_\PR(X_0,\ldots,X_{k+1})
=\Law_\PP(Y_{\tau_0},\ldots,Y_{\tau_{k+1}}).
\end{equation}
If $\PP(\tau_k=\infty)=1$
(equivalently, $\PR(X_k=0)=1$),
then $Y_{\tau_{k+1}}=0$ $\PP$-a.s.
and $X_{k+1}=0$ $\PR$-a.s.,
so \eqref{eq:p5}
follows from~\eqref{eq:p3}.
Below we assume that $\PP(\tau_k<\infty)>0$.
Let us introduce the probability measure $\QQ$
on $(\Omega,\cF)$ by the formula
$$
\QQ(\cdot):=\PP(\cdot|\tau_k<\infty).
$$
We will use the notation
$\cG:=\sigma(Y_{\tau_0},\ldots,Y_{\tau_k})$.
One can easily check that,
for any nonnegative
random variable $Z$, we have
\begin{equation}
\label{eq:p6}
\EE_\QQ(Z|\cG)
=\EE_\PP(Z|\cG)
\quad\PP\text{-a.s. on }
\{\tau_k<\infty\}
\end{equation}
(note that $\PP$-a.s. we have
$\{\tau_k<\infty\}=\{Y_{\tau_k}>0\}\in\cG$)
or, equivalently,
\begin{equation}
\label{eq:p8}
\EE_\QQ(Z|\cG)
=\EE_\PP(Z|\cG)
\quad\QQ\text{-a.s.\phantom{ on }}
\phantom{\{\tau_k<\infty\}}
\end{equation}
In fact, the identities~\eqref{eq:p6}
and~\eqref{eq:p8}
hold even conditionally on~$\cF_{\tau_k}$.
It follows from~\eqref{eq:p8}
and~\eqref{eq:p4}
that, for~$x\in[0,1]$,
$\QQ$-a.s. we have
\begin{equation}
\label{eq:p9}
\QQ(R_{k+1}\leq x|\cG)
=\PP(R_{k+1}\leq x|\cG)
=\PP(R_{k+1}\leq x)=x,
\end{equation}
i.e. under $\QQ$
conditionally on $\cG$
the random variable $R_{k+1}$
is uniformly distributed on $[0,1]$.
Let now $A\in\cG$ and $B=\{R_{k+1}\leq x\}$.
Then, by~\eqref{eq:p9},
$$
\QQ(A\cap B)
=\EE_\QQ\left[1_A \QQ(B|\cG)\right]
=\EE_\QQ\left[1_A \QQ(B)\right]
=\QQ(A)\QQ(B),
$$
i.e. $\cG$ and $R_{k+1}$
are independent under~$\QQ$.
One can deduce from the
strong Markov property
of Brownian motion
(e.g.~in the
form~\cite[Ch.~III, Th.~3.1]{RevuzYor:99})
that under~$\QQ$ the process
$(Y_{t+\tau_k}/Y_{\tau_k})$
is a geometric Brownian motion
independent of $\cF_{\tau_k}$.
Since $\cG\subset\cF_{\tau_k}$
and $R_{k+1}$ is $\cF_{\tau_k}$-measurable
(even $\cF_0$-measurable),
we get
\begin{equation}
\label{eq:p11}
\left({Y_{t+\tau_k}}/{Y_{\tau_k}}\right)_{t\geq0},\;
\cG\text{ and }R_{k+1}
\text{ are independent under }\QQ.
\end{equation}
Summarising, we have:
\begin{enumerate}
\item
Under $\QQ$ conditionally on $\cG$
the process $(Y_{t+\tau_k}/Y_{\tau_k})$
is a geometric Brownian motion.
\item
Under $\QQ$ conditionally on $\cG$
the random variable $R_{k+1}$
is uniformly distributed on $[0,1]$.
\item
Under $\QQ$ conditionally on $\cG$
the process $(Y_{t+\tau_k}/Y_{\tau_k})$
and the random variable $R_{k+1}$ are independent
(this follows from~\eqref{eq:p11}).
\end{enumerate}
Therefore, by Lemma~\ref{lem:p1}
applied under $\QQ$ conditionally on $\cG$,
for any $x\in\bbR_+$,
$\QQ$-a.s. it holds
$$
\QQ\left(\left.\frac{Y_{\tau_{k+1}}}{Y_{\tau_k}}\leq x
\right|\cG\right)
=F_{k+1}(x|Y_{\tau_0},\ldots,Y_{\tau_k}).
$$
By~\eqref{eq:p6},
$\PP$-a.s. on $\{\tau_k<\infty\}$ it holds
\begin{equation}
\label{eq:p7}
\PP\left(\left.\frac{Y_{\tau_{k+1}}}{Y_{\tau_k}}\leq x
\right|\cG\right)
=F_{k+1}(x|Y_{\tau_0},\ldots,Y_{\tau_k}).
\end{equation}
But $\PP$-a.s. on
$\{\tau_k=\infty\}(\equiv\{Y_{\tau_k}=0\})$
we have $Y_{\tau_{k+1}}=0$,
i.e. the left-hand side
of~\eqref{eq:p7} is then $1_{\{x\geq1\}}$,
which coincides with the right-hand side
of~\eqref{eq:p7} on this event.
Thus, \eqref{eq:p7} holds
$\PP$-a.s. on $\Omega$
(not only on~$\{\tau_k<\infty\}$).
Since $x\in\bbR_+$ is arbitrary,
this implies that
$F_{k+1}(\cdot|Y_{\tau_0},\ldots,Y_{\tau_k})$
is a version of the
regular conditional distribution function
(under~$\PP$)
of $Y_{\tau_{k+1}}/Y_{\tau_k}$
given $Y_{\tau_0},\ldots,Y_{\tau_k}$.
Together with~\eqref{eq:p3}
and the definition of $F_{k+1}$
this implies~\eqref{eq:p5}.
The induction step is proved.

Thus, we can construct
a nondecreasing family $(\tau_n)$
of $(\cF_t)$-stopping times
such that the discrete-time processes
$(X_n)$ and $(Y_{\tau_n})$
have the same finite-dimensional distributions.
This completes the proof of the lemma.
\end{proof}

\subsection{Continuous-Time Limit.}
\label{par:pct}
Let us proceed with the proof of Theorem~\ref{th:i1}.
We are now given a continuous-time
nonnegative supermartingale $(X_s)$
with $\EE X_0\leq1$.
For each $n\in\bbN$,
let us consider the piecewise
constant nonnegative supermartingale
$(X^n_s)
=\big(X_{2^{-n}\lfloor 2^n s\rfloor}\big)$.
By Lemma~\ref{lem:p2},
there exists a geometric Brownian motion
$(Y^n_t)$ and a (piecewise constant)
time change $(T^n_s)$
on some filtered probability space
$(\Omega^n,\cF^n,(\cF^n_t),\PP^n)$
such that the processes
$(X^n_s)$ and $(Y^n_{T^n_s})$
have the same law.
Without loss of generality we assume that
$\lim_{t\to\infty}Y^n_t(\omega_n)=0$
for all $\omega_n\in\Omega^n$
and set $Y^n_\infty(\omega_n):=0$
for all $\omega_n\in\Omega^n$.

Let $C([0,\infty])$ be the space of all continuous functions $z\colon[0,\infty]\to\bbR$ with the sup-norm, and let $\cA$ be the set of all non-decreasing right-continuous functions $a\colon[0,\infty)\to [0,\infty]$. Define a metric $\rho$ on $\cA$ by
$\rho(a_1,a_2) = d(\hat a_1, \hat a_2)$, where $\hat a_i = \frac{a_i}{1+a_i}$ and
\[
d(b_1,b_2)=\sum_{k=1}^\infty 2^{-k}\int_0^k | b_1(t) - b_2(t)|\dd t.
\]
It is easy to check that the convergence in the metric $d$ in the space $\hat\cA$ of all non-decreasing right-continuous functions $b\colon[0,\infty)\to [0,1]$ is equivalent to the pointwise convergence for every point at which the limiting function is continuous. By Helly's theorem, $(\hat \cA,d)$ is a compact. Hence, $(\cA,\rho)$ is a compact.

Put $\Omega = C([0,\infty])\times\cA$, $\cF = \cB(C([0,\infty]))\otimes\cB(\cA)$. The space $\Omega$ with the product topology is a complete separable metric space. We define the measurable mapping
$f_n\colon\Omega^n\to\Omega$ by
\[
f_n(\omega_n) = (Y^n(\omega_n),T^n(\omega_n)).
\]
Let $\QQ^n$ be the image of $\PP^n$ under $f_n$. First, we show that the sequence $\QQ_n$ of probability measures on $(\Omega,\cF)$ is tight. It is sufficient to check that the projections of $\QQ^n$ on $C([0,\infty])$ and on $\cA$ are tight. The projection of $\QQ^n$ on
$C([0,\infty])$ is the law of a geometric Brownian motion and does not depend on $n$, which implies the tightness of the projections on $C([0,\infty])$. The tightness of projections on $\cA$ follows from the compactness of $\cA$. Thus, the sequence $\QQ^n$ is tight. Now we define $\PP$ as an accumulation point of this sequence. It is evident that the process $(Z_t)_{t\in\bbR_+}$
on~$\Omega$ defined by $Z_t(z,a)=z(t)$ is a (standard)
geometric Brownian motion under~$\PP$:
namely, the process $(W_t)_{t\in\bbR_+}$
with $W_t=\log Z_t+t/2$,
which is well-defined under~$\PP$,
is a Brownian motion under~$\PP$.

Define the process $(T_s)_{s\in\bbR_+}$
on $\Omega$ by $T_s(z,a)=a(s)$
and consider the minimal right-continuous
filtration $(\cF_t)_{t\in\bbR_+}$ on $\Omega$
with respect to which
$(Z_t)$ is adapted and
$(T_s)$ is a time change, i.e.
\[
\cF_t = \bigcap_{\eps>0} \sigma
\big(Z_u,\{T_s\leq v\}\colon
u,v\in[0,t+\eps], s\in\bbR_+\big).
\]
The remaining steps of the proof are to show that:
\begin{enumerate}
\item
The process $(Z_{T_s})$ has the same law
under $\PP$ as $(X_s)$.
\item
The process $(W_t)$ is
an $(\cF_t,\PP)$-Brownian motion,
i.e. $W_t-W_s$ is independent of $\cF_s$
under~$\PP$
for any $s<t$, $s,t\in\bbR_+$.
\end{enumerate}
These two steps are proved similarly to the corresponding steps in the proof of Theorem~2
in~\cite{Monroe:78}
with obvious changes.

One can also give an alternative proof using a version of Theorem (3.2) in \cite{BaxterChacon:77}. The idea is to introduce a kind of stable topology on $\Omega$ such that (2) remains true after passing to the limit; however, then the compactness is a nontrivial issue.

\section{SEP, Minimality and Embedding of Processes}
\label{sec:app}
\subsection{Classical SEP
and Minimal Stopping Times}
We start with a few remarks
on the evolution of the formulation
of the embedding problem for a Brownian motion.

\begin{problem}[Embedding in a Brownian motion,
naive formulation]
\label{pr:app1}
\mbox{}\\
Given: a real-valued random variable $\xi$.\\
To find: a filtered probability space
$(\Omega,\cF,(\cF_t)_{t\geq0},\PP)$,
an $(\cF_t,\PP)$-Brownian motion $B=(B_t)$
and a finite
$(\cF_t)$-stopping time $\tau$
such that $B_\tau\sim\xi$.
\end{problem}

Problem~\ref{pr:app1}
admits the following trivial solution.
Let $f\colon\bbR\to\bbR$
be a function such that
$f(B_1)\sim\xi$.
Then, with
\begin{equation}
\label{eq:app1}
\tau:=\inf\{t\geq1:B_t=f(B_1)\},
\end{equation}
due to recurrence of a Brownian motion,
we have
$B_\tau=f(B_1)\sim\xi$.
This solution is attributed to Doob
(see the discussion in Section~2.3
in~\cite{Obloj:04}
or Section~3.2 in~\cite{Hobson:11})
and is intended to show that without
additional requirements
the problem is trivial.

Therefore, the original formulation
of the SEP contains some restrictions:

\begin{problem}[SEP,
Skorokhod~\cite{Skorokhod:61}
and~\cite{Skorokhod:65}]
\label{pr:app2}
\mbox{}\\
Given: a real-valued random variable $\xi$
\emph{with $\EE\xi=0$ and $\EE\xi^2<\infty$.}\\
To find: a filtered probability space
$(\Omega,\cF,(\cF_t)_{t\geq0},\PP)$,
an $(\cF_t,\PP)$-Brownian motion $B=(B_t)$
and an $(\cF_t)$-stopping time $\tau$
\emph{with $\EE\tau<\infty$}
such that $B_\tau\sim\xi$.
\end{problem}

Note that the stopping time $\tau$ of~\eqref{eq:app1}
is excluded because, for this stopping time,
$\EE\tau=\infty$
(unless $f$ is the identity function,
which is only possible when $\xi\sim N(0,1)$).
Let us further note that
$\EE\tau<\infty$
implies $\EE B_\tau=0$ and $\EE B_\tau^2=\EE\tau$,
hence we need to assume
$\EE\xi=0$ and $\EE\xi^2<\infty$
in the formulation
when we have the requirement
$\EE\tau<\infty$.
However,
these assumptions
($\EE\xi=0$ and $\EE\xi^2<\infty$)
constitute
the drawback of the formulation
in Problem~\ref{pr:app2}.
For example, the stopping times
in the original Skorokhod's construction
(see~\cite{Skorokhod:61} and~\cite{Skorokhod:65})
require from $\xi$ only to have a finite mean,
but, as we have just seen,
unless we assume a finite variance,
it is no longer clear
how to select ``good'' stopping times.

A very natural way to select ``good''
stopping times is to require them
to be minimal
(instead of requiring $\EE\tau<\infty$)
in the following sense.
A finite stopping time $\tau$
is said to be \emph{minimal}
if, for a stopping time~$\sigma$,
$\sigma\leq\tau$ and $B_\sigma\sim B_\tau$
imply $\sigma=\tau$~a.s.
In the context of the SEP,
this was suggested in~\cite{Monroe:72}
(such a concept of minimality
is attributed by Monroe~\cite{Monroe:72} to Doob)
and taken on in many subsequent works on the SEP.
In particular,
for centred target distributions,
minimality is characterised
in~\cite{Monroe:72}
as follows.

\begin{theorem}[Monroe~\cite{Monroe:72}]
\label{th:app1}
Let $\tau$ be a finite
stopping time such that
$\EE B_\tau=0$.
Then $\tau$ is minimal if and only if
the process $(B_{t\wedge\tau})_{t\geq0}$
is uniformly integrable.
\end{theorem}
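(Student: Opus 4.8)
The plan is to prove the two implications separately, taking $B_0=0$ and freely using the Tanaka--Meyer formula, optional sampling and the strong Markov property of Brownian motion. The ``if'' part (uniform integrability of $(B_{t\wedge\tau})$ implies minimality) is direct, while the ``only if'' part is best handled by contraposition and is where the real work lies.

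For the forward direction, let $\sigma\le\tau$ be a stopping time with $B_\sigma\sim B_\tau$, and fix the strictly convex function $\varphi(x)=\sqrt{1+x^2}$, which is Lipschitz with $0<\varphi''$ bounded. First I would observe that, since $(B_{t\wedge\tau})$ is a uniformly integrable martingale closed by $B_\tau$, optional sampling gives $B_{t\wedge\sigma}=\EE(B_\tau|\cF_{t\wedge\sigma})$, so $(B_{t\wedge\sigma})$ is uniformly integrable too; both stopped processes then converge to their terminal values in $L^1$, and as $\varphi$ is Lipschitz, $\varphi(B_{t\wedge\sigma})\to\varphi(B_\sigma)$ and $\varphi(B_{t\wedge\tau})\to\varphi(B_\tau)$ in $L^1$. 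Next, Itô's formula gives $\EE\varphi(B_{t\wedge\rho})=\varphi(0)+\tfrac12\EE\int_0^{t\wedge\rho}\varphi''(B_u)\,du$ for $\rho\in\{\sigma,\tau\}$ (the stochastic integral is a true martingale because $\varphi'$ is bounded); letting $t\to\infty$ and using monotone convergence on the right yields
\[
\EE\varphi(B_\rho)=\varphi(0)+\tfrac12\,\EE\int_0^{\rho}\varphi''(B_u)\,du,\qquad \rho\in\{\sigma,\tau\}.
\]
Subtracting the two identities and using $\EE\varphi(B_\sigma)=\EE\varphi(B_\tau)$ (equal laws) gives $\EE\int_\sigma^\tau\varphi''(B_u)\,du=0$. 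Since $\varphi''(B_u)>0$ along every (continuous, hence bounded on compacts) path, this forces $\sigma=\tau$~a.s., i.e.\ minimality.

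For the converse I would argue by contraposition: assuming $(B_{t\wedge\tau})$ is \emph{not} uniformly integrable, I must produce $\sigma\le\tau$ with $\PP(\sigma<\tau)>0$ and $B_\sigma\sim B_\tau$. The quantitative form of non-uniform-integrability is a local-time defect: by Tanaka's formula $\EE|B_{t\wedge\tau}|=\EE L^0_{t\wedge\tau}$, so $\EE L^0_\tau=\lim_t\EE|B_{t\wedge\tau}|\ge\EE|B_\tau|$, with equality exactly in the uniformly integrable case; hence here $\EE L^0_\tau>\EE|B_\tau|$ (and likewise $\EE L^x_\tau>\EE|B_\tau-x|-|x|$ for a set of levels $x$). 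This excess local time is precisely time the path spends oscillating across levels without contributing to the terminal law. The construction then removes part of it: using the strong Markov property I would, on an event of positive probability, terminate at the start of an excursion away from a suitable level that is destined to return to that level before $\tau$, re-gluing the subsequent evolution so that the terminal position is unchanged in law.

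The main obstacle is exactly this excision: one must cut genuine time (so that $\PP(\sigma<\tau)>0$) while certifying that $B_\sigma$ and $B_\tau$ have \emph{identical} laws, not merely equal potentials. The difficulty is that the natural object to cut—a last-exit or excursion-straddling time—is typically not a stopping time, so the reduction must be organised through the strong-Markov/excursion structure, and keeping the embedded distribution preserved exactly is the crux; this is where Monroe's~\cite{Monroe:72} argument becomes technical. A more symmetric, potential-theoretic route would instead phrase both directions through the identity $\EE L^x_\tau=\EE|B_\tau-x|-|x|$ (valid iff $(B_{t\wedge\tau})$ is uniformly integrable) and read off minimality from minimality of the local-time profile; this streamlines the forward direction but leaves the same reduction step to be carried out.
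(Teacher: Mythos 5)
This theorem is quoted from Monroe~\cite{Monroe:72}; the paper gives no proof of it, so there is no internal argument to compare against line by line. The closest internal material is Theorem~\ref{th:mst1} together with Remark~\ref{rem:mst4}, which recover the ``if'' direction (in the stronger one-sided form: uniform integrability of $(B_{t\wedge\tau}^-)$ or of $(B_{t\wedge\tau}^+)$ already suffices). Your proof of that direction is correct and is essentially the same device the paper uses there: a strictly convex function of linear growth, here $\varphi(x)=\sqrt{1+x^2}$, with the equality $\EE\varphi(B_\sigma)=\EE\varphi(B_\tau)$ forcing the occupation integral $\EE\int_\sigma^\tau\varphi''(B_u)\,du$ to vanish (the paper phrases the same point as equality in Jensen's inequality rather than via It\^o, but the content is identical). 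The only detail worth recording is that $\EE\int_0^\tau\varphi''(B_u)\,du=2(\EE\varphi(B_\tau)-1)\leq 2\,\EE|B_\tau|<\infty$, so the subtraction of the two identities is legitimate.

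The ``only if'' direction, however, is a genuine gap, and you in effect concede it. Your reduction to the local-time defect $\EE L^0_\tau>\EE|B_\tau|$ is a correct reformulation of non-uniform-integrability, but it is only a restatement of what must be exploited, not a construction. The proposed cut point --- ``the start of an excursion away from a suitable level that is destined to return to that level before $\tau$'' --- is an anticipating random time: whether an excursion returns before $\tau$ is not $\cF_t$-measurable at its start, so as written $\sigma$ is not a stopping time, and no replacement (randomized stopping time, Chacon--Walsh-type comparison with a uniformly integrable embedding of the same law, or an explicit coupling) is supplied. Nor is any argument given that the excision preserves the terminal law exactly rather than merely the potential. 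This construction is precisely the content of Monroe's theorem (and of Theorem~5 in~\cite{CoxHobson:06}), so the proposal proves only one implication and sketches, without closing, the other.
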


This characterisation proved to be very useful.
We will also need it below.
Let us further remark that minimality
for non-centred target distributions
was characterised in~\cite{CoxHobson:06},
in particular, Theorem~\ref{th:app1}
was generalised for $\EE|B_\tau|<\infty$.
See also
Section~8 in~\cite{Obloj:04},
Sections~3.4 and~4.2
in~\cite{Hobson:11}
as well as
Section~2.2 in~\cite{Obloj:12}
for a further discussion of minimality.

Summarising,
\cite{Monroe:72} and~\cite{CoxHobson:06}
inspire the following formulation of the SEP:

\begin{problem}[SEP,
Monroe~\cite{Monroe:72},
Cox and Hobson~\cite{CoxHobson:06}]
\label{pr:app3}
\mbox{}\\
Given: a real-valued random variable $\xi$.\\
To find: a filtered probability space
$(\Omega,\cF,(\cF_t)_{t\geq0},\PP)$,
an $(\cF_t,\PP)$-Brownian motion $B=(B_t)$
and a \emph{minimal}
$(\cF_t)$-stopping time $\tau$
such that $B_\tau\sim\xi$.
\end{problem}

Let us note that Problem~\ref{pr:app3}
is more general than Problem~\ref{pr:app2}
in the sense that each solution
of Problem~\ref{pr:app2}
is a solution of Problem~\ref{pr:app3}
(if $\sigma\leq\tau$ are solutions
of Problem~\ref{pr:app2},
then $\EE\tau=\EE\sigma=\EE\xi^2<\infty$,
i.e. $\sigma=\tau$~a.s.,
hence $\tau$ is minimal),
but we do not assume
$\EE\xi=0$ and $\EE\xi^2<\infty$
any longer.

\subsection{Embedding of Processes}%
\mbox{}
Here we explain why Theorem~\ref{th:i1}
is not a consequence of~\cite{Monroe:72}
and~\cite{Monroe:78}.
In fact, what can be inferred directly
from Monroe's results is only the following
(weaker) statement.

\begin{proposition}
\label{prop:app1}
Let $X=(X_s)_{s\geq0}$
be a nonnegative martingale with $\EE X_0=1$.
Then there exists a filtered probability space
$(\Omega,\cF,(\cF_t)_{t\geq0},\PP)$,
an $(\cF_t,\PP)$-Brownian motion $W=(W_t)$
and a $[0,\infty]$-valued
$(\cF_t)$-time change $(T_s)$
such that the processes
$(X_s)_{s\geq0}$ and $(Z_{T_s})_{s\geq0}$
have the same law,
where $Z_t=e^{W_t-t/2}$, $t\geq0$.
\end{proposition}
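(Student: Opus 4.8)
The plan is to obtain Proposition~\ref{prop:app1} from Monroe's results~\cite{Monroe:72,Monroe:78} by exploiting the fact that, up to a time change, a geometric Brownian motion \emph{is} a Brownian motion absorbed at~$0$. First I would record this bridge. The process $Z_t=e^{W_t-t/2}$ is a strictly positive continuous martingale with $Z_\infty=0$ and $\langle Z\rangle_t=\int_0^t Z_u^2\,\dd u$, and $\langle Z\rangle_\infty<\infty$ a.s.\ because $W_u-u/2\to-\infty$. By the Dambis--Dubins--Schwarz theorem (\cite[Ch.~V]{RevuzYor:99}) there is a Brownian motion $\beta$ with $\beta_0=1$ and $Z_t=\beta_{\langle Z\rangle_t}$; since $Z$ stays strictly positive on $[0,\infty)$ and tends to~$0$, one gets $\langle Z\rangle_\infty=H:=\inf\{u:\beta_u=0\}$. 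Thus $Z$ is a time change of the stopped Brownian motion $(\beta_{u\wedge H})$ and, conversely, $(\beta_{u\wedge H})$ is a time change of~$Z$, the value $T_s=\infty$ for~$Z$ corresponding to absorption of~$\beta$ at~$0$ (the increasing bijection $\langle Z\rangle\colon[0,\infty)\to[0,H)$ sends $H$ to $\infty$). By scaling, the statement with $\beta_0=1$ transfers to an arbitrary starting level $a>0$, and a random starting level $\sim\Law(X_0)$ can be produced by an external randomisation exactly as in Lemma~\ref{lem:p1}.

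Next I would invoke Monroe's theorem. The process $X$ is a nonnegative martingale, hence a semimartingale, and $0$ is absorbing for it ($X_t=0$ forces $X_s=0$ for $s\geq t$). By~\cite{Monroe:78}, $X$ is equivalent to a finite time change of a Brownian motion~$B$. The point is to realise this embedding inside a Brownian motion \emph{started at $\Law(X_0)$ and absorbed at~$0$}. Here the hypotheses $\EE X_0=1$ and the martingale property enter decisively: they guarantee $\EE X_s=1$ for all~$s$, so that no mass is lost except through absorption at~$0$, and they let one arrange the embedding stopping times to be minimal in the sense of~\cite{Monroe:72,CoxHobson:06}. For a Brownian motion started at a positive level, minimality forces the embedding to live below the first hitting time of~$0$, i.e.\ to be an embedding into the absorbed motion rather than into the free Brownian motion.

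Finally I would compose the two time changes: embedding $X$ into the absorbed Brownian motion and then identifying the latter with a time change of~$Z$ yields the desired representation $(X_s)\sim(Z_{T_s})$ with a $[0,\infty]$-valued time change, the infinite values of~$T_s$ occurring precisely where $X$ has been absorbed at~$0$ (matching $Z_\infty=0$). The filtered probability space and the Brownian motion~$W$ are then read off from this construction.

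The hard part will be the justification that, in the martingale case, the Monroe embedding can be taken to respect the absorption at~$0$ --- equivalently, that minimality of the embedding of a nonnegative martingale corresponds to staying below the hitting time of~$0$ --- together with the careful bookkeeping of the possibly infinite time change and the initial randomisation. This is also precisely the point at which the argument is confined to the martingale case: for a genuine supermartingale (where $\EE X_s$ may strictly decrease, or $\EE X_0<1$) mass is lost continuously and not only through absorption at~$0$, so there is no single-Brownian-motion minimal-stopping-time picture to exploit. That is why Theorem~\ref{th:i1} cannot be deduced from~\cite{Monroe:72,Monroe:78} and requires the independent construction of Section~\ref{sec:p}.
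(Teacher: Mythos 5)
Your proposal follows essentially the same route as the paper's: identify $Z$, via the Dambis--Dubins--Schwarz time change built from $A_t=\int_0^t Z_r^2\,dr$, with a Brownian motion started at $1$ and absorbed at $0$; invoke Monroe's embedding of a martingale by a time change consisting of \emph{minimal} stopping times; use minimality to force $T_s\leq H^Y_0$, so that the embedding lives in the absorbed motion; and then compose the two time changes. Two small points where the paper's version is cleaner. First, it applies Theorem~\ref{th:app2} to the martingale $X-1$, so the Brownian motion starts at the deterministic level $1$ (matching $Z_0=1$) and the randomness of $X_0$ is carried by the stopping time $T_0$; your external randomisation of the starting level is unnecessary and would only complicate the identification with $Z$. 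Second, the step you defer as ``the hard part'' is precisely where the paper uses Theorem~\ref{th:app1}: minimality together with $\EE Y_{T_s}=1$ yields uniform integrability of $(Y_{u\wedge T_s})_{u\geq0}$, and then $Y_{u\wedge T_s}=\EE(Y_{T_s}\,|\,\cF_u)\geq0$ forces $T_s\leq H^Y_0$ --- minimality alone, without the nonnegativity of the target and the centring condition, would not suffice, so the claim ``minimality forces the embedding to live below the first hitting time of $0$'' should be stated with these hypotheses. Your closing explanation of why the argument is confined to the martingale case matches the paper's discussion.
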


Let us remark that nonnegative supermartingales
$(X_s)$ with $\EE X_0\leq1$
(see Theorem~\ref{th:i1}~(i))
is an important class of processes,
which naturally appears
in different branches of stochastics
such as financial mathematics
or sequential analysis.
As for financial mathematics,
so-called supermartingale deflators appear naturally as an extension of the class of the density processes of equivalent martingale measures. In particular, existence of a strictly positive supermartingale deflator is a weaker assumption than existence of equivalent (local) martingale measure and is equivalent to some form of absence of arbitrage, see~\cite{KaratzasKardaras:07}. Even if an equivalent local martingale measure exists, it is necessary to use supermartingale deflators in the utility maximization problem, see~\cite{KramkovSchachermayer:99}.
As for sequential analysis,
let us, for instance, note
that given two probability measures
$\PP$ and $\QQ$
on a filtered space $(\Omega,\cF,(\cF_t))$
the generalised density process
$(\frac{d\QQ_t}{d\PP_t})$
is, in general, only a supermartingale
under $\PP$
(a martingale only when $\QQ$
is locally absolutely continuous
with respect to~$\PP$).
Thus, it is really better to have
Theorem~\ref{th:i1}
than just Proposition~\ref{prop:app1}.

Turning to the discussion
of the relations with Monroe's results,
let us first recall that both
in~\cite{Monroe:72}
and in~\cite{Monroe:78}
the question is treated
of whether a process is equivalent
to a time-changed Brownian motion.
The difference is that, in~\cite{Monroe:72},
only finite time changes
consisting of minimal stopping times,
while in~\cite{Monroe:78},
all finite time changes are considered.
Therefore, the results are very different:

\begin{theorem}[Monroe~\cite{Monroe:72}]
\label{th:app2}
Let $M=(M_s)_{s\geq0}$ be a martingale.
Then there is a filtered probability space
$(\Omega,\cF,(\cF_t)_{t\geq0},\PP)$,
an $(\cF_t,\PP)$-Brownian motion
$W=(W_t)$
and a finite $(\cF_t)$-time change $(T_s)$
such that all stopping times $T_s$
are minimal and the processes
$(M_s)$ and $(W_{T_s})$
have the same law.
\end{theorem}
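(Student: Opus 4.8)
The plan is to follow the same three-step scheme used for Theorem~\ref{th:i1}, but with the geometric Brownian motion replaced by $W$ itself and the supermartingale structure replaced by the martingale structure, controlling minimality throughout via the characterisations in Theorem~\ref{th:app1} (centred case) and its extension in~\cite{CoxHobson:06} (general integrable case). The three stages are: a single-variable minimal embedding, an inductive embedding of a discrete-time martingale, and a passage to the continuous-time limit as in Section~\ref{par:pct}.

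First I would treat the embedding of a single centred random variable $\xi$ with $\EE\xi=0$ and only $\EE|\xi|<\infty$ assumed. I would use a known minimal solution of the Skorokhod embedding problem — for instance a balayage / Chacon--Walsh type construction in which $\tau$ arises as an increasing limit of exit times of $B$ from nested randomised intervals — so that $B_\tau\sim\xi$ and $\tau<\infty$ a.s. Minimality is then read off from Theorem~\ref{th:app1}: since $\EE B_\tau=0$, minimality is equivalent to uniform integrability of $(B_{t\wedge\tau})_{t\ge0}$, a property built into such constructions. The initial datum $M_0$, which need not be centred, would be embedded separately by a general minimal solution in the sense of~\cite{CoxHobson:06}.

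Second, for a discrete-time martingale $(M_n)$ I would proceed by induction as in Lemma~\ref{lem:p2}. Having embedded $(M_0,\dots,M_n)$ as $(W_{T_0},\dots,W_{T_n})$, the martingale property makes the conditional law of $M_{n+1}-M_n$ given the past centred, so I embed it in the restarted motion $(W_{t+T_n}-W_{T_n})$ using the single-variable step and set $T_{n+1}=T_n+\sigma_{n+1}$ with $\sigma_{n+1}$ the corresponding minimal increment; the restarted process is independent of $\cF_{T_n}$ by the strong Markov property, which legitimises the conditional embedding. The main obstacle here is a composition lemma showing that each $T_n$ is again minimal: if $T_n$ is minimal and $\sigma_{n+1}$ is a minimal embedding for the independent restarted motion, then $T_{n+1}$ is minimal, the crux being that $(W_{t\wedge T_{n+1}})$ stays uniformly integrable, inherited from stage $n$ together with the increment and reading off minimality via Theorem~\ref{th:app1}.

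Finally, I would pass to the continuous-time limit as in Section~\ref{par:pct} and in Theorem~2 of~\cite{Monroe:78}: discretise $(M_s)$ dyadically, embed each $(M^n_s)$ by the discrete step to obtain pairs $(W^n,T^n)$, establish tightness of their laws, and extract a weak accumulation point. The genuinely delicate points, which I expect to be the heart of the argument, are twofold. One is ensuring that the limiting time change is \emph{finite}, so that the values $T_s$ do not escape to infinity in the limit even though $\EE T_s$ may be infinite for a general martingale. The other, harder, point is that minimality must be preserved under the limit; since minimality is a uniform-integrability condition it is not automatically closed under weak convergence, so I would verify directly that $(W_{t\wedge T_s})$ is uniformly integrable for each fixed $s$ in the limit and then appeal once more to Theorem~\ref{th:app1}.
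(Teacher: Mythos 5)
The paper does not prove this statement: Theorem~\ref{th:app2} is quoted verbatim from Monroe~\cite{Monroe:72}, so there is no in-paper proof to compare against, and your outline has to be measured against Monroe's original argument. Your three-stage plan (minimal single-variable embedding, inductive embedding of the discretised martingale, continuous-time limit) is indeed the shape of that argument and of the paper's proof of Theorem~\ref{th:i1}, so the architecture is right.

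There is, however, a concrete gap in the minimality bookkeeping. Theorem~\ref{th:app1} characterises minimality by uniform integrability of the stopped process \emph{only for centred targets}, and your inductive step relies on ``$(W_{t\wedge T_{n+1}})$ stays uniformly integrable, inherited from stage $n$''. If $\EE M_0\neq 0$ this is impossible at every stage: $(W_{t\wedge T_n})$ is a martingale started at $0$, so uniform integrability would force $\EE W_{T_n}=0$, whereas $W_{T_n}\sim M_n$ has mean $\EE M_0\neq 0$. Thus for a general martingale the mechanism you propose for certifying minimality of $T_1,T_2,\dots$ breaks down; you must either reduce to the centred case (e.g.\ first let $W$ run to the level $\EE M_0$ and argue minimality of the composite directly, or let the Brownian motion start at $\EE M_0$) or carry the non-centred characterisation of~\cite{CoxHobson:06} through the entire induction, not only for $T_0$. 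Separately, the two points you defer are exactly where the substance lies and should not be left as assertions: the composition lemma follows by domination --- conditionally on $\cF_{T_n}$ the process $\bigl(W_{(T_n+u)\wedge T_{n+1}}\bigr)_{u\geq0}$ is a uniformly integrable martingale closed by $W_{T_{n+1}}$, whence $|W_{t\wedge T_{n+1}}|\leq\EE\bigl(|W_{T_{n+1}}|\,\big|\,\cF_t\bigr)$ and uniform integrability follows from $\EE|M_{n+1}|<\infty$; and the same domination, which is uniform in the discretisation level $n$ because the closing variables $|M^n_s|$ form a uniformly integrable family along the dyadic approximation, is what lets uniform integrability (hence finiteness and minimality) survive the passage to the weak limit. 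As written, the proposal identifies the right skeleton but neither proves the two key lemmas nor handles the non-centred case correctly.
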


\begin{theorem}[Monroe~\cite{Monroe:78}]
\label{th:app3}
A c\`adl\`ag process $X=(X_s)_{s\geq0}$
is a semimartingale if and only if
there is a filtered probability space
$(\Omega,\cF,(\cF_t)_{t\geq0},\PP)$,
an $(\cF_t,\PP)$-Brownian motion
$W=(W_t)$
and a finite $(\cF_t)$-time change $(T_s)$
such that the processes
$(X_s)$ and $(W_{T_s})$
have the same law.
\end{theorem}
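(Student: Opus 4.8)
This is Monroe's theorem \cite{Monroe:78}, so I would ultimately cite it; the line of argument I would follow runs parallel to the proof of Theorem~\ref{th:i1} given above. I would first dispose of the easy implication. Given a Brownian motion $W$ and a \emph{finite} $(\cF_t)$-time change $(T_s)$, optional sampling shows that $(W_{T_s})$ is an $(\cF_{T_s})$-local martingale (finiteness of the $T_s$ is exactly what is needed here), hence a semimartingale. Since the semimartingale property in the \emph{natural} filtration is determined by the law alone --- by Stricker's theorem a semimartingale stays a semimartingale in any subfiltration to which it is adapted, and the Bichteler--Dellacherie ``good integrator'' criterion is phrased purely through finite-dimensional distributions --- the process $(X_s)$, having the same law as $(W_{T_s})$, is itself a semimartingale.

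The substance is the converse, which I would organise in the same three steps as Section~\ref{sec:p}. \emph{Step 1 (single random variable).} Embed an arbitrary real $\xi$ into a Brownian motion by a \emph{minimal} stopping time, i.e. solve Problem~\ref{pr:app3}: for centred $\xi$ one may use the classical Skorokhod time, which is minimal by Theorem~\ref{th:app1}, and for non-centred targets the minimal embeddings of \cite{Monroe:72} and \cite{CoxHobson:06}. Working with minimality rather than with $\EE\tau<\infty$ is essential precisely because the conditional increments of a general semimartingale need not be centred.

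\emph{Step 2 (discrete skeleton).} For each $n$ I would discretise to $X^n_s=X_{2^{-n}\lfloor 2^n s\rfloor}$ and iterate Step~1 over the regular conditional laws of the increments $X_{(k+1)2^{-n}}-X_{k2^{-n}}$ given the past, exactly as Lemma~\ref{lem:p2} does for the multiplicative increments of a supermartingale. This produces a Brownian motion $W^n$ and a nondecreasing family of \emph{finite} stopping times $(T^n_s)$ with $(X^n_s)\sim(W^n_{T^n_s})$. As in the discrete step above, the only real care is the bookkeeping of the external randomisations and of the conditional distributions; in contrast to the geometric case the stopping times here remain finite, which is a simplification.

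\emph{Step 3 (continuous-time limit), and the main obstacle.} I would pass to the limit as in Section~\ref{par:pct}: form the laws $\QQ^n$ of the pairs $(W^n,T^n)$ on $C(\bbR_+)\times\cA$ and extract a weakly convergent subsequence, the $C(\bbR_+)$-marginals all being Wiener measure. The whole difficulty is concentrated in the time change: tightness on $\cA$ is automatic since $\cA$ is compact, but one must rule out that the limiting time change escapes to $+\infty$, i.e. show it is a.s. finite --- this is where the semimartingale hypothesis is indispensable and is the hard part. The clean route is through the Bichteler--Dellacherie characterisation of a semimartingale as a good integrator: boundedness in probability of the elementary stochastic integrals of $X$ transfers, via the embedding and the identity $\langle W^n\rangle_{T^n_s}=T^n_s$, into uniform (in $n$) tightness of the functionals controlling $T^n_s$, which prevents blow-up of the time change in the limit. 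Once the limiting $(T_s)$ is shown to be finite a.s., identifying the law of $(W_{T_s})$ with that of $X$ and verifying that $W$ is a Brownian motion in the limit filtration proceeds exactly as in steps~(1)--(2) of Section~\ref{par:pct} and in Theorem~2 of \cite{Monroe:78}.
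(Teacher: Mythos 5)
The paper offers no proof of this statement at all: it is quoted as Monroe's theorem and the citation of \cite{Monroe:78} \emph{is} the paper's proof. Your instinct to cite it is therefore the right one, but the sketch you wrap around the citation has one outright error and one unfilled gap at the crux. The error is in the ``easy'' direction: finiteness of the $T_s$ does not license optional sampling, and $(W_{T_s})_{s\geq0}$ is in general \emph{not} an $(\cF_{T_s})$-local martingale. Take $T_s=\inf\{t\geq0:W_t>s\}$, a finite, nondecreasing, right-continuous family of stopping times by recurrence of $W$; then $W_{T_s}=s$, a deterministic strictly increasing process. (Contrast part~(ii) of Theorem~\ref{th:i1}, where optional sampling applies to arbitrary stopping times precisely because a nonnegative supermartingale is closed by its terminal value; a Brownian motion is not closed, and that is exactly why minimality matters in Section~\ref{sec:app}.) The correct route is that a time change by finite stopping times maps semimartingales to semimartingales --- a classical but not immediate fact --- after which Stricker's theorem and the law-invariance of the semimartingale property in the natural filtration give the conclusion, as you say.

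The gap is in your Step 3, which is where all the work of Monroe's converse actually sits. Ruling out that the limiting time change charges $+\infty$ is the whole theorem, and your appeal to the Bichteler--Dellacherie criterion (``boundedness in probability of elementary integrals transfers into uniform tightness of the functionals controlling $T^n_s$'') is an assertion, not an argument: you do not say which functionals, nor why $\langle W^n\rangle_{T^n_s}=T^n_s$ is bounded in probability uniformly in $n$, nor how an estimate on integrals of $X$ against simple integrands becomes an estimate on the embedding times you constructed in Step~2. Monroe's control is quantitative and goes through the structure of the semimartingale itself (localisation and a decomposition into a martingale part and a part of controlled conditional variation), with explicit bounds on the embedding times of the discrete skeletons in terms of that variation; without some bound of this kind a weak limit point of the $\QQ^n$ may perfectly well put mass on time changes taking the value $+\infty$, exactly the phenomenon the present paper has to embrace (rather than exclude) in the geometric case. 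As written, your proposal reduces the hard half of the theorem to a sentence that would not survive being asked ``why?''.
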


We now know what kind of processes
can be viewed as time changes of a Brownian motion
(Theorems~\ref{th:app2} and~\ref{th:app3}),
while we are interested in understanding of
what kind of processes can be viewed as
time changes of the geometric Brownian motion
$Z=(Z_t)$ of Theorem~\ref{th:i1}~(i).
The idea is first to change time in $Z$
in order to get a Brownian motion starting from one
(to which we then want to apply Monroe's results),
but it is only possible to obtain a Brownian motion
absorbed at zero.
More precisely,
with $A_t:=[Z,Z]_t=\int_0^t Z_r^2\,dr$,
we define the time change
$\tau_u:=\inf\{r\geq0:A_r>u\}$
and set
\begin{equation}
\label{eq:app2}
B^0_u:=Z_{\tau_u},\quad u\geq0.
\end{equation}
Note that $A_\infty<\infty$~a.s.
and that $(\tau_u)$ is strictly increasing
on $[0,A_\infty)$ and is equal
to $+\infty$ on $[A_\infty,\infty)$.
We have:
$B^0=(B^0_u)_{u\geq0}$
is a Brownian motion absorbed at zero
with $B^0_0=1$
(see \cite[Ch.~V, \S~1]{RevuzYor:99}).
Now the idea is:
if we can embed a process $X$
in $B^0$ in the sense that
$(X_s)$ and $(B^0_{T_s})$
have the same law for some time change $(T_s)$,
then we can embed $X$ in $Z$
via the time change $(\tau_{T_s})$
(see~\eqref{eq:app2}).
At this point Theorem~\ref{th:app2}
turns out to be very useful
and gives us Proposition~\ref{prop:app1}.
Namely, let $X$ be a nonnegative martingale
with $\EE X_0=1$.
Applying Theorem~\ref{th:app2}
to the martingale $X-1$
we get that $(X_s)$ has the same law as
$(Y_{T_s})$ for some Brownian motion $Y$
starting from one and a time change $(T_s)$
such that all stopping times $T_s$ are minimal.
By Theorem~\ref{th:app1},
for each $s\geq0$,
the process $(Y_{u\wedge T_s})_{u\geq0}$
is uniformly integrable.
(The condition $\EE B_\tau=0$ in Theorem~\ref{th:app1}
takes here the form $\EE Y_{T_s}=1$
because $Y$ starts from one.
This is fulfilled because $X$ is a martingale.)
Since $X_s\geq0$~a.s., we have
$Y_{T_s}\geq0$~a.s., hence
the uniformly integrable martingale
$(Y_{u\wedge T_s})_{u\geq0}$
is nonnegative, which implies
$$
T_s\leq H^Y_0:=\inf\{u\geq0:Y_u=0\}\quad\text{a.s.}
$$
Therefore, $Y_{T_s}=Y^0_{T_s}$~a.s.
for all $s\geq0$, where
$Y^0:=(Y_{u\wedge H^Y_0})_{u\geq0}$
is the Brownian motion $Y$ stopped
at the time it hits zero.
Thus, $X$ can be embedded
in the absorbed Brownian motion
$Y^0$, i.e. this idea works.
There remain some technical details,
but it is already clear that, indeed,
Proposition~\ref{prop:app1}
can be inferred from Monroe's results,
namely, from Theorems~\ref{th:app2}
and~\ref{th:app1}.

On the contrary,
such an argumentation
does not work any longer
if we try to obtain Theorem~\ref{th:i1}~(i)
from Theorem~\ref{th:app3}.
Indeed, let $X$ be a nonnegative
supermartingale with $\EE X_0\leq1$.
Applying Theorem~\ref{th:app3}
to the semimartingale $X-1$
we get that $(X_s)$ has the same law as
$(Y_{T_s})$ for some Brownian motion $Y$
starting from one and a time change $(T_s)$.
But now there is no reason
for stopping times $T_s$ to be minimal.
We need to justify that $T_s\leq H^Y_0$
with $ H^Y_0$ defined as above,
but it was minimality of $T_s$
together with the property $\EE Y_{T_s}=1$
that previously gave us
the desired inequality $T_s\leq H^Y_0$.
In the situation of Theorem~\ref{th:app3},
it can happen that the desired inequality
fails even when we start with
a nonnegative supermartingale
$X$ with $\EE X_0\leq1$
(one can easily construct such examples
due to recurrence of the Brownian motion).
Thus, what we need is to justify
that whenever $X$ is a nonnegative
supermartingale with $\EE X_0\leq1$,
then it is possible not only to find
some time change $(T_s)$
as stated in Theorem~\ref{th:app3},
but rather a time change
with the additional property
$T_s\leq H^Y_0$.
However, the latter statement
is beyond the scope
of Monroe's theorems.

Moreover, the following statement,
which complements Theorem~\ref{th:app2},
is a direct consequence of our Theorem~\ref{th:i1}.

\begin{theorem}
\label{th:app4}
Let $X=(X_s)_{s\geq0}$
be a supermartingale bounded from below with $\EE X_0\leq0$.
Then there is a filtered probability space
$(\Omega,\cF,(\cF_t)_{t\geq0},\PP)$,
an $(\cF_t,\PP)$-Brownian motion
$W=(W_t)$
and a finite $(\cF_t)$-time change $(T_s)$
such that all stopping times $T_s$
are minimal and the processes
$(X_s)$ and $(W_{T_s})$
have the same law.
\end{theorem}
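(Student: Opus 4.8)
The plan is to derive the statement from Theorem~\ref{th:i1} through the time change relating a geometric Brownian motion to an absorbed Brownian motion that was already used around~\eqref{eq:app2}, and then to verify minimality directly. Since $X$ is bounded from below, fix $c\ge0$ with $X_s\ge-c$ for all $s$. If $c=0$, then $X$ is a nonnegative supermartingale with $\EE X_0\le0$, hence $X\equiv0$, and one may take $T_s\equiv0$, which is a minimal finite time change. So assume $c>0$ and put $X'_s:=X_s/c+1$. Then $X'$ is a nonnegative supermartingale with $\EE X'_0=\EE X_0/c+1\le1$, so Theorem~\ref{th:i1} yields, on a suitable space, $(X'_s)\sim(Z_{T'_s})$ for a geometric Brownian motion $Z_t=e^{W_t-t/2}$ and a $[0,\infty]$-valued time change $(T'_s)$.

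I would then pass to an absorbed Brownian motion as in~\eqref{eq:app2}: with $A_t=\int_0^t Z_r^2\,\dd r$ and $\tau_u=\inf\{r\ge0:A_r>u\}$, the process $B^0_u:=Z_{\tau_u}$ is a Brownian motion started at $1$ and absorbed at $0$, and $Z_t=B^0_{A_t}$ (see~\cite[Ch.~V, \S1]{RevuzYor:99}). Hence $Z_{T'_s}=B^0_{S_s}$ with $S_s:=A_{T'_s}\le A_\infty=\inf\{u:B^0_u=0\}<\infty$. Writing $B^0=\wt W_{\cdot\wedge H_0}$ for a Brownian motion $\wt W$ started at $1$, $H_0=\inf\{u:\wt W_u=0\}$, and using $S_s\le H_0$, we get $(X'_s)\sim(\wt W_{S_s})$. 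Undoing the normalisation via $X_s=c(X'_s-1)$, the process $W_v:=c\,\wt W_{v/c^2}-c$ is a standard Brownian motion started at $0$, and with $T_s:=c^2S_s$ we obtain $W_{T_s}=c\wt W_{S_s}-c$, so that $(X_s)\sim(W_{T_s})$ as processes. The family $(T_s)$ is a finite time change, its monotonicity and right-continuity being inherited from $(T'_s)$ through the deterministic time substitutions, and $T_s\le c^2H_0=\inf\{v:W_v=-c\}=:H_{-c}<\infty$; in particular the stopped process $(W_{v\wedge T_s})_v$ is bounded below by $-c$.

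It remains to show that each $T_s$ is minimal, which is the crux. The key is the sufficient condition: \emph{if $\tau$ is a stopping time with $\tau\le H_{-c}$ for some $c\ge0$ and $\EE|W_\tau|<\infty$, then $\tau$ is minimal} (one could also invoke the characterisation of minimality in~\cite{CoxHobson:06}, which complements Theorem~\ref{th:app1}). To prove it, note that $(W_{v\wedge\tau}+c)$ is a nonnegative local martingale, hence a supermartingale, so for any stopping time $\sigma\le\tau$ optional sampling gives $\EE(W_\tau+c\mid\cF_\sigma)\le W_\sigma+c$. If moreover $W_\sigma\sim W_\tau$, then $\EE W_\sigma=\EE W_\tau$, which upgrades the inequality to the equality $\EE(W_\tau\mid\cF_\sigma)=W_\sigma$ a.s. Thus, with respect to $\cG_v:=\cF_{(\sigma+v)\wedge\tau}$, the process $M_v:=W_{(\sigma+v)\wedge\tau}$ is a uniformly integrable martingale with $M_0=W_\sigma\sim W_\tau=M_\infty$; taking a bounded strictly convex $\phi$, e.g. $\phi(x)=\sqrt{1+x^2}$, conditional Jensen gives $\EE\phi(M_\infty)\ge\EE\phi(M_0)=\EE\phi(M_\infty)$, so equality holds and strict convexity forces $M_\infty=M_0$ a.s. Then $M_v=\EE(M_\infty\mid\cG_v)=M_0$ for all $v$ (as $M_0$ is $\cG_0$-measurable), so $W$ is constant on $[\sigma,\tau]$ and hence $\sigma=\tau$ a.s. by nondegeneracy of Brownian motion. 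Since $T_s\le H_{-c}$ and $\EE|W_{T_s}|=\EE|X_s|<\infty$, every $T_s$ is minimal.

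The step I expect to be the main obstacle is exactly this verification of minimality: the reduction to Theorem~\ref{th:i1} delivers only a pathwise bound $T_s\le H_{-c}$, and turning it into minimality must be done while controlling integrability, since in general only first moments of $X_s$ are available. The argument above is arranged precisely to avoid any appeal to second moments of $W_\tau$.
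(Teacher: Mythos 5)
Your proposal is correct and follows essentially the same route as the paper: normalise $X$ to a nonnegative supermartingale with mean at most one, apply Theorem~\ref{th:i1}, convert the geometric Brownian motion into a Brownian motion via the quadratic-variation time change (the paper does the scaling by $c$ in one step by applying Dambis--Dubins--Schwarz to $cZ$), and exploit the resulting bound $T_s\leq H^W_{-c}$ to get minimality. Your self-contained minimality argument is precisely the proof of Theorem~\ref{th:mst1} specialised to $g(x)=x+c$ (the paper simply cites that theorem, or Theorem~5 of Cox--Hobson); the only point you gloss over is that the uniform integrability/martingale property of $M_v=W_{(\sigma+v)\wedge\tau}$ at intermediate times needs the same sandwich $\EE W_\tau\leq\EE W_\rho\leq\EE W_\sigma$ for $\sigma\leq\rho\leq\tau$, which is the one-line step made explicit in the paper's proof of Theorem~\ref{th:mst1}.
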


\begin{proof}
Let $c>0$ and $X_s\geq-c$ for all $s\geq0$.
By Theorem~\ref{th:i1}, the process $(c^{-1}X_s+1)_{s\geq0}$ is equivalent to a time-changed
geometric Brownian motion $(Z_{\sigma_s})_{s\geq0}$ given on a filtered probability space
$(\tilde\Omega,\tilde\cF,(\tilde\cF_t)_{t\geq0},\tilde\PP)$. Put $A_t:=[cZ,cZ]_t=c^2\int_0^t Z_r^2\,dr$ and $\tau_u:=\inf\{r\geq0:A_r>u\}$. As above, $A_\infty<\infty$~a.s. and $(\tau_u)$ is strictly increasing
on $[0,A_\infty)$ and is equal
to $+\infty$ on $[A_\infty,\infty)$. By the Dambis--Dubins--Schwarz theorem, see \cite[Ch.~V, Theorem 1.7]{RevuzYor:99}, there is a standard Brownian motion $W=(W_t)_{t\geq0}$ on an enlargement $(\Omega,\cF,(\cF_t)_{t\geq0},\PP)$ of $(\tilde\Omega,\tilde\cF,(\tilde\cF_{\tau_t})_{t\geq0},\tilde\PP)$ such that, for all $t\geq 0$,
\[
c+W_{t\wedge A_\infty} = cZ_{\tau_t}
 \text{ and, therefore, } c+W_{A_t} = cZ_t.
\]
Then $T_s:=A_{\sigma_s}$ is a time change with respect to $(\tilde\cF_{\tau_s})$ and hence to $(\cF_s)$,
\begin{equation}
\label{eq:app3}
T_s\leq T_\infty\leq A_\infty=\inf\{t\geq0:W_t=-c\}
\;\;(\equiv H^W_{-c})
\end{equation}
(in particular, $T_s$ are finite),
and $(X_s)_{s\geq0}$ is equivalent
to $(W_{T_s})_{s\geq0}$.

Finally, the fact that all $T_s$, $s\geq 0$,
are minimal follows
via~\eqref{eq:app3}
from Theorem~\ref{th:mst1} below
or from Theorem~5 in~\cite{CoxHobson:06}.
\end{proof}

The above discussion shows that Theorem~\ref{th:i1} can be deduced from Theorem~\ref{th:app4} as well
(in place of Theorem~\ref{th:app1}
use Theorem~5 in~\cite{CoxHobson:06}).

\section{Minimal Stopping Times
for Other Processes}
\label{sec:mst}
Above we discussed only minimal
stopping times for a Brownian motion,
but one can similarly consider
minimality of a stopping time for any process
(cf.~Section~3.4 in~\cite{Hobson:11}).

In this section, we consider a state space
$(E,\cE)$,
where $E$ is $[l,r]$ with $-\infty\leq l<r\leq\infty$
or $\bbR^d\cup\{\infty\}$
and $\cE$ is the Borel $\sigma$-field
on~$E$.\footnote{As for the topology
on $\bbR^d\cup\{\infty\}$ that we consider,
the neighbourhood system for $\infty$
in $\bbR^d\cup\{\infty\}$
is the family of the complements
of the compact sets in $\bbR^d$.}
It may be convenient
that the state space contains infinite points
in order to treat stopping times
that can take infinite value.

\begin{definition}
\label{def:mst1}
Let $X=(X_t)_{t\geq0}$
be an $E$-valued adapted c\`adl\`ag process
on a filtered probability space
$(\Omega,\cF,(\cF_t)_{t\geq0},\PP)$.
An $(\cF_t)$-stopping time $\tau$
is said to be \emph{minimal for $X$}
if, for an $(\cF_t)$-stopping time~$\sigma$,
$\sigma\leq\tau$ and $X_\sigma\sim X_\tau$
imply $\sigma=\tau$~a.s.
The limit $X_\infty:=\lim_{t\to\infty}X_t$
will exist a.s. on the set $\{\tau=\infty\}$
whenever minimality of a stopping time
$\tau$ with $\PP(\tau=\infty)>0$ is checked
(so that $X_\tau$ and $X_\sigma$ are well-defined).
\end{definition}

Let us remark that, e.g.,
for a Brownian motion with a non-zero drift,
the natural state space is $\ol\bbR:=[-\infty,\infty]$.
This allows to check every stopping time
for minimality and not a priori to exclude
stopping times that can take infinite value.
In this connection, let us also notice that,
for a Brownian motion with a non-zero drift,
every stopping time is minimal,
which follows from the next theorem
(this is different from the case of a Brownian motion,
cf.~Section~\ref{sec:app}).

\begin{theorem}
\label{th:mst1}
Let $\tau$ be an $(\cF_t)$-stopping time
and $g\colon E\to\ol\bbR$
a measurable function
such that the following holds:
\begin{enumerate}[(a)]
\item\label{it:msta}
the stopped process
$g(X)^\tau=(g(X_{t\wedge\tau}))_{t\geq0}$
is a \emph{closed} supermartingale
(i.e. $g(X)^\tau$ is a supermartingale
bounded from below by a uniformly
integrable martingale),
\item\label{it:mstb}
a.s. $g(X)$ has no intervals of constancy
on the stochastic interval $[0,\tau)$,
\item\label{it:mstc}
a.s. on $\{\tau=\infty\}$ there exists
$X_\infty:=\lim_{t\to\infty}X_t$.
\end{enumerate}
Then $\tau$ is minimal for~$X$.
\end{theorem}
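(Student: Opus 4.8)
The plan is to prove minimality by contradiction: suppose there is a stopping time $\sigma\leq\tau$ with $X_\sigma\sim X_\tau$ but $\PP(\sigma<\tau)>0$, and derive a contradiction from the supermartingale structure imposed via $g$. The natural object to exploit is the closed supermartingale $g(X)^\tau$ from hypothesis~\eqref{it:msta}. Since $g(X)^\tau$ is dominated below by a uniformly integrable martingale, the optional sampling theorem applies at the stopping times $\sigma$ and $\tau$, and I would extract from it the inequality $\EE g(X_\sigma)\geq\EE g(X_\tau)$ with equality governed tightly by the martingale part. Because $X_\sigma\sim X_\tau$, we have $g(X_\sigma)\sim g(X_\tau)$ and in particular $\EE g(X_\sigma)=\EE g(X_\tau)$ (once integrability is secured by the closedness in~\eqref{it:msta}). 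So optional sampling is forced to hold with \emph{equality}.

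The key step is then to turn this equality into a statement that $g(X)^\tau$ is a \emph{martingale} (not merely a supermartingale) on the stochastic interval $[\sigma,\tau]$. First I would apply optional sampling between $\sigma$ and $\tau$ to get $\EE\big(g(X_\tau)\mid\cF_\sigma\big)\leq g(X_\sigma)$ a.s.; combined with the equality of the unconditional expectations just obtained, this conditional inequality must in fact be an equality a.s., i.e. $\EE\big(g(X_\tau)\mid\cF_\sigma\big)=g(X_\sigma)$. This means $g(X)$, stopped at $\tau$ and sampled from time $\sigma$ onward, behaves like a closed martingale: the supermartingale $\big(g(X_{t\wedge\tau})\big)_{t\geq0}$ has no downward drift on $[\sigma,\tau)$, so $\big(g(X_{(\sigma\vee t)\wedge\tau})\big)_{t\geq0}$ is a uniformly integrable martingale. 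In particular its paths are a.s. non-constant wherever the underlying process moves, yet the martingale property on $[\sigma,\tau)$ forces the conditional law of $g(X_\tau)$ given $\cF_\sigma$ to have mean $g(X_\sigma)$ while $X_\sigma\sim X_\tau$ pins down the full law; the tension between these is what drives the contradiction.

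To close the argument I would invoke hypothesis~\eqref{it:mstb}: on the event $\{\sigma<\tau\}$, which has positive probability, the path of $g(X)$ is non-constant on the nondegenerate interval $[\sigma,\tau)$. But a non-negative-variance martingale increment cannot be reconciled with the equality-in-distribution $X_\sigma\sim X_\tau$: the martingale property gives $\EE\big(g(X_\tau)^2\big)\geq\EE\big(g(X_\sigma)^2\big)$ with strict inequality whenever the increment is genuinely random on a set of positive measure, contradicting $g(X_\sigma)\sim g(X_\tau)$ (which forces equality of second moments, once square-integrability is arranged). The role of~\eqref{it:mstc} is bookkeeping: it guarantees $X_\tau$ is well-defined on $\{\tau=\infty\}$ so that $X_\sigma\sim X_\tau$ and all the sampling identities make sense.

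The main obstacle I anticipate is \textbf{integrability}. The equality $\EE g(X_\sigma)=\EE g(X_\tau)$ and the second-moment comparison both require that the relevant functionals be integrable, and the hypothesis only gives a supermartingale \emph{bounded below} by a uniformly integrable martingale — so $g(X)^\tau$ need not be bounded above, and $g(X_\tau)$ need not be square-integrable. I would first handle this by reducing $g(X)^\tau$ to its martingale lower bound plus a non-negative potential, applying optional sampling to the uniformly integrable martingale part (where it is automatic) and treating the potential part via Fatou/monotone arguments; the variance-based contradiction may need to be replaced by a more robust argument using strict convexity of a truncated function or a direct examination of the non-constancy of paths, so as to avoid assuming $L^2$ bounds that the hypotheses do not supply.
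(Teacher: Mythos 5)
Your proposal is correct and follows essentially the same route as the paper: optional sampling for the closed supermartingale $g(X)^\tau$ combined with the equality of expectations forced by $X_\sigma\sim X_\tau$ gives $\EE\big(g(X_\tau)\,\big|\,\cF_\sigma\big)=g(X_\sigma)$ a.s., and the paper then applies conditional Jensen with the strictly convex \emph{linear-growth} function $h(x)=\sqrt{1+x^2}$ --- precisely the ``truncated strictly convex function'' fix you anticipate in place of the unavailable $L^2$ argument --- to conclude $g(X_\tau)=g(X_\sigma)$ a.s. The one step you leave implicit that the paper spells out is the passage from equality at the two endpoints to constancy of the whole path: one sandwiches an arbitrary intermediate stopping time $\rho$ with $\sigma\leq\rho\leq\tau$ to get $X_\rho=\EE(X_\tau|\cF_\rho)=\EE(X_\sigma|\cF_\rho)=X_\sigma$ a.s., and only then does hypothesis~(b) yield $\sigma=\tau$ a.s.
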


\begin{remark}
\label{rem:mst1}
Let $E=[l,r]$ and $g$ be strictly monotone.
Then $\tau$ is minimal
whenever only \eqref{it:msta}
and~\eqref{it:mstb} hold
(in other words, condition~\eqref{it:mstc}
can be dropped in this case).
Indeed, if $(X_t)_{t\geq0}$
had distinct limit points
as $t\to\infty$ on $\{\tau=\infty\}$,
then $(g(X_t))_{t\geq0}$
would have distinct limit points as well.
But the latter is not the case because,
by~\eqref{it:msta},
the limit $\lim_{t\to\infty}g(X_t)$
exists a.s. on $\{\tau=\infty\}$
($g(X)^\tau$ converges a.s.
as a closed supermartingale).
\end{remark}

\begin{proof}[Proof of Theorem~\ref{th:mst1}]
Without loss of generality we assume below
that $g$ is the identity function
(otherwise pursue the reasoning below
with $g(X)$ in place of~$X$).

The proof is a combination of two following arguments.

(1) For a closed supermartingale~$Y$,
Doob's optional sampling theorem
works with arbitrary stopping times,
i.e., for any stopping times $\rho\leq\eta$,
we have $Y_\rho,Y_\eta\in L^1$
and ${\EE(Y_\eta|\cF_\rho)\leq Y_\rho}$~a.s.

(2) If $\xi_1\leq\xi_2$ are random variables
in $L^1$ with $\EE\xi_1=\EE\xi_2$,
then $\xi_1=\xi_2$~a.s.

\smallskip
Suppose that $\sigma$
is a stopping time with
$\sigma\leq\tau$ and $X_\sigma\sim X_\tau$.
Then, by arguments (1) and~(2),
$\EE(X_\tau|\cF_\sigma)=X_\sigma$~a.s.
Take a strictly convex function $h$
of linear growth,
e.g. $h(x)=\sqrt{1+x^2}$.
By Jensen's inequality and argument~(2),
$\EE(h(X_\tau)|\cF_\sigma)=h(X_\sigma)$~a.s.,
i.e. we have the equality in Jensen's
inequality with a strictly convex function.
Then $X_\tau=\EE(X_\tau|\cF_\sigma)$~a.s.,
i.e. $X_\tau=X_\sigma$~a.s.

Let $\rho$ be any stopping time with
$\sigma\leq\rho\leq\tau$. Then
$$
X_\rho=\EE(X_\tau|\cF_\rho)
=\EE(X_\sigma|\cF_\rho)
=X_\sigma\quad\text{a.s.},
$$
where the first equality is due to
arguments (1) and~(2)
(use
${\EE(X_\tau|\cF_\sigma)
\leq\EE(X_\rho|\cF_\sigma)
\leq X_\sigma}$~a.s.).
Since $X$ has no intervals of constancy
on $[0,\tau)$,
we get $\sigma=\tau$~a.s.
\end{proof}

\begin{remark}
\label{rem:mst2}
Theorem~\ref{th:mst1}
can be slightly generalised as follows.
The word ``supermartingale''
in~\eqref{it:msta}
should be understood
as a c\`adl\`ag process
that is a supermartingale
in the sense of Definition~(1.1)
in~\cite[Ch.~II]{RevuzYor:99}
and the following assumption
should be added:
\begin{enumerate}[(a)]
\setcounter{enumi}{3}
\item\label{it:mstd}
$g(X_\tau)\in L^1$.
\end{enumerate}
This slightly more general definition
of a supermartingale
(applied to a process~$Y$)
differs from the usual one
in that only $Y_t^-\in L^1$,
$t\geq0$, is required,
while $Y_t$ can be non-integrable
(and can even take value $\infty$
with a positive probability).
The resulting statement is slightly
stronger than Theorem~\ref{th:mst1}
(in Theorem~\ref{th:mst1},
\eqref{it:mstd} is satisfied automatically,
see argument~(1) in the proof),
but the formulation of Theorem~\ref{th:mst1}
is more transparent in the present form.
The same proof applies with the only difference:
in argument~(1) we only have
$Y_\rho^-,Y_\eta^-\in L^1$,
but, due to~\eqref{it:mstd},
we always can use argument~(2)
when we need it.
\end{remark}

In the examples below we will see
that Theorem~\ref{th:mst1}
applies in many specific situations.
We will also need the following lemma
(its proof is straightforward).

\begin{lemma}
\label{lem:mst1}
Let $Y=(Y_t)_{t\geq0}$ be a supermartingale.
Then
$$
Y
\text{ is a closed supermartingale }
\Longleftrightarrow
\text{ the family }
(Y_t^-)_{t\geq0}
\text{ is uniformly integrable.}
$$
\end{lemma}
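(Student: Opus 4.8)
The plan is to prove both implications directly, with the forward direction (closedness $\Rightarrow$ uniform integrability of the negative parts) being elementary and the converse requiring a short convergence argument. For ``$\Rightarrow$'', suppose $Y$ is closed, so there is a uniformly integrable martingale $M=(M_t)_{t\ge0}$ with $Y_t\ge M_t$ for all $t$. Then $Y_t^-\le M_t^-$ pointwise. Since $(M_t)_{t\ge0}$ is uniformly integrable, so is the family $(M_t^-)_{t\ge0}$, and a nonnegative family dominated by a uniformly integrable one is again uniformly integrable; hence $(Y_t^-)_{t\ge0}$ is uniformly integrable.

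For ``$\Leftarrow$'', assume $(Y_t^-)_{t\ge0}$ is uniformly integrable, so in particular $\sup_t\EE Y_t^-<\infty$. Since the supermartingale property gives $\EE Y_t^+=\EE Y_t+\EE Y_t^-\le\EE Y_0+\EE Y_t^-$, the family $(Y_t)_{t\ge0}$ is bounded in $L^1$. Therefore $Y$ converges a.s. to a limit $Y_\infty$, and $Y_\infty\in L^1$: Fatou gives $\EE Y_\infty^+<\infty$, while uniform integrability of the negative parts gives $Y_t^-\to Y_\infty^-$ in $L^1$, so $Y_\infty^-\in L^1$. The natural candidate for a closing martingale is $M_t:=\EE(Y_\infty\mid\cF_t)$, which is a uniformly integrable martingale because it is closed by the integrable $Y_\infty$. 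It then remains to verify $M_t\le Y_t$ a.s., which shows $Y$ is bounded from below by a uniformly integrable martingale and hence closed.

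The one nontrivial point, and the main obstacle, is establishing $\EE(Y_\infty\mid\cF_s)\le Y_s$ a.s. One cannot simply pass to the limit in $Y_s\ge\EE(Y_t\mid\cF_s)$, because only the negative parts are controlled, so $\EE(Y_t\mid\cF_s)$ need not converge to $\EE(Y_\infty\mid\cF_s)$ directly. I would instead split into positive and negative parts. Uniform integrability of $(Y_t^-)$ together with $Y_t^-\to Y_\infty^-$ a.s. gives $\EE(Y_t^-\mid\cF_s)\to\EE(Y_\infty^-\mid\cF_s)$ in $L^1$, while conditional Fatou applied to the nonnegative $Y_t^+\to Y_\infty^+$ yields $\EE(Y_\infty^+\mid\cF_s)\le\liminf_t\EE(Y_t^+\mid\cF_s)$. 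Rewriting the supermartingale inequality as $\EE(Y_t^+\mid\cF_s)\le Y_s+\EE(Y_t^-\mid\cF_s)$ and taking $\liminf$ over $t\to\infty$ (along a subsequence realizing the $L^1$-limit of the negative parts) then gives
\[
\EE(Y_\infty^+\mid\cF_s)\le Y_s+\EE(Y_\infty^-\mid\cF_s),
\]
that is, $\EE(Y_\infty\mid\cF_s)\le Y_s$ a.s., as required. This completes the verification that $M_t=\EE(Y_\infty\mid\cF_t)$ bounds $Y$ from below, and hence that $Y$ is a closed supermartingale.
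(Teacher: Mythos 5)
The paper offers no proof of this lemma (it is dismissed as ``straightforward''), so there is nothing to compare against; judged on its own, your argument is correct and is the standard one. The forward direction via $Y_t^-\le M_t^-$ is fine, and in the converse you correctly identify and handle the one genuine subtlety: one cannot pass to the limit in $\EE(Y_t\mid\cF_s)\le Y_s$ directly, and your split into $\EE(Y_t^-\mid\cF_s)\to\EE(Y_\infty^-\mid\cF_s)$ (by uniform integrability, along an a.s.\ convergent subsequence) plus conditional Fatou for the positive parts closes the gap. The only point left implicit is that $M_t:=\EE(Y_\infty\mid\cF_t)\le Y_t$ is obtained for each fixed $t$ a.s., and one upgrades this to ``for all $t$ simultaneously'' by taking a c\`adl\`ag version of $M$ and using right-continuity of both processes over rational $t$; this is routine and does not affect the validity of the proof.
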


In Examples~\ref{ex:mst1} and~\ref{ex:mst2} below,
$X$ will be a one-dimensional diffusion.
To this end, we introduce some notations.
Let $J=(l,r)$, $-\infty\leq l<r\leq\infty$,
and $E=[l,r]$.
We consider a time-homogeneous
diffusion $X$ in $J$
being a solution of the SDE
\begin{equation}
\label{eq:mst10}
dX_t=\mu(X_t)\,dt+\sigma(X_t)\,dW_t,
\quad X_0=x_0\in J,
\end{equation}
on some filtered probability space
$(\Omega,\cF,(\cF_t)_{t\geq0},\PP)$,
where $W$ is an $(\cF_t)$-Brownian motion.
We assume that the coefficients
$\mu$ and $\sigma$
are Borel-measurable functions that satisfy
\begin{gather}
\label{eq:mst20}
\sigma(x)\neq0\;\;\forall x\in J,\\
\label{eq:mst30}
\frac1{\sigma^2},\frac\mu{\sigma^2}\in L^1_\loc(J),
\end{gather}
where $L^1_\loc(J)$ denotes the set
of locally integrable on $J$ functions.
Under \eqref{eq:mst20} and~\eqref{eq:mst30}
SDE~\eqref{eq:mst10}
has a weak solution, unique in law,
which possibly exits~$J$
(see~\cite[Sec.~5.5]{KaratzasShreve:91}).
The exit time is denoted by $\zeta$.
That is to say,
a.s. on $\{\zeta=\infty\}$
the trajectories of $X$ do not exit~$J$,
while a.s. on $\{\zeta<\infty\}$ we have:
either $\lim_{t\nearrow\zeta}X_t=r$
or $\lim_{t\nearrow\zeta}X_t=l$.
We specify the behaviour of $X$
after $\zeta$ on $\{\zeta<\infty\}$
by making $l$ and $r$
be absorbing boundaries.
Thus, we get an $E$-valued
process $X=(X_t)_{t\geq0}$.
For some $c\in J$, we set
$$
s(x)=\int_c^x
\exp\left\{
-\int_c^y\frac{2\mu}{\sigma^2}(z)\,dz
\right\}\,dy,
\quad x\in E
\;\;(\equiv[l,r]),
$$
which is a scale function of~$X$
(any scale function of $X$
is an affine transformation
of $s$ with a strictly positive slope).
Let us note that, on~$J$,
$s$ is a strictly increasing $C^1$-function
with a strictly positive absolutely
continuous derivative,
while~$s(r)$ (resp.~$s(l)$)
may take value~$\infty$ (resp.~$-\infty$).
Finally, we recall that $s(X)$
is an $(\cF_t)$-local martingale
(the boundary, at which the scale function
is infinite, is not attained).

\begin{example}[One-dimensional diffusion,
transient case]
\label{ex:mst1}
Assume that $s(r)\wedge|s(l)|<\infty$.
Then $s(X)$ is a local martingale
bounded from below (if~$s(l)>-\infty$)
or from above (if~$s(r)<\infty$),
hence a closed super- or submartingale.
Theorem~\ref{th:mst1}
with $g$ being $s$ or $-s$
implies that,
under $s(r)\wedge|s(l)|<\infty$,
$$
\text{every }
(\cF_t)
\text{-stopping time }
\tau
\text{ such that }
\tau\leq\zeta
\text{ a.s. is minimal for }
X.
$$
(Notice that,
by It\^o's formula applied to $s(X)$,
assumption~\eqref{it:mstb}
in Theorem~\ref{th:mst1}
follows from~\eqref{eq:mst20},
while \eqref{it:mstc} need not be checked
due to Remark~\ref{rem:mst1}.)
\end{example}

\begin{remark}
\label{rem:mst3}
Let $a\neq0$ and $B$
be an $(\cF_t)$-Brownian motion
on some filtered probability space.
Set $Y_t=B_t+at$, $t\geq0$.
It follows from the previous example
that every $(\cF_t)$-stopping time is minimal for~$Y$
(and for the geometric Brownian motion $e^Y$).
In particular, contrary to the Brownian case,
when considering the SEP for the geometric
Brownian motion $(e^{B_t-t/2})$,
as we did in Lemma~\ref{lem:p1},
there is no difference between
setting the problem like Problem~\ref{pr:app1}
or like Problem~\ref{pr:app3}
in Section~\ref{sec:app}.
\end{remark}

\begin{example}[One-dimensional diffusion,
recurrent case]
\label{ex:mst2}
Assume that $s(r)=-s(l)=\infty$.
Then $\zeta=\infty$~a.s.
and $\limsup_{t\to\infty}X_t=r$~a.s.,
$\liminf_{t\to\infty}X_t=l$~a.s.
In particular, in this example
minimality is well-defined
only for finite $(\cF_t)$-stopping times.
We first assume that the local martingale
$Y:=s(X)$ is, in fact, a true martingale.
Let us note that $Y$ satisfies the SDE
\begin{equation}
\label{eq:mst35}
dY_t=\ka(Y_t)\,dW_t,
\quad Y_0=y_0:=s(x_0),
\end{equation}
where $\ka:=(s'\sigma)\circ s^{-1}$
is a Borel-measurable function satisying
\begin{equation}
\label{eq:mst40}
\ka(x)\neq0\;\;\forall x\in\bbR,
\quad\ka^{-2}\in L^1_\loc(\bbR).
\end{equation}
It follows from~\cite{Kotani:06}
that $Y$ is a martingale if and only if
\begin{equation}
\label{eq:mst50}
\int_c^\infty\frac x{\ka^2(x)}\,dx=\infty
\text{ and }
\int_{-\infty}^c\frac{|x|}{\ka^2(x)}\,dx=\infty
\end{equation}
with some $c\in\bbR$
(condition~\eqref{eq:mst50} does not depend
on $c$ due to~\eqref{eq:mst40}).
Now Theorem~\ref{th:mst1}
with $g$ being $s$ or $-s$
and Lemma~\ref{lem:mst1}
imply that,
under $s(r)=-s(l)=\infty$ and~\eqref{eq:mst50},
any $(\cF_t)$-stopping time $\tau$
satisfying
\begin{equation}
\label{eq:mst60}
\text{either }
(s(X_{t\wedge\tau})^-)_{t\geq0}
\text{ or }
(s(X_{t\wedge\tau})^+)_{t\geq0}
\text{ is uniformly integrable}
\end{equation}
is finite and minimal for~$X$.
(We also get the finiteness
of $\tau$ from~\eqref{eq:mst60}
because the closed super-
or submartingale $s(X)^\tau$
converges~a.s.,
see Remark~\ref{rem:mst1}.)
Finally, if we no longer assume~\eqref{eq:mst50},
then any $(\cF_t)$-stopping time $\tau$
satisfying
\begin{equation}
\label{eq:mst65}
\text{either }
\EE\sup_{t\geq0}
s(X_{t\wedge\tau})^-<\infty
\text{ or }
\EE\sup_{t\geq0}
s(X_{t\wedge\tau})^+<\infty
\end{equation}
is finite and minimal for~$X$.
(Under~\eqref{eq:mst65},
$s(X)^\tau$ is a closed super-
or submartingale
as a local martingale bounded
from below or from above
by an integrable random variable.)
\end{example}

\begin{remark}
\label{rem:mst4}
Let $B$
be an $(\cF_t)$-Brownian motion
on some filtered probability space.
It follows from the previous example that
any $(\cF_t)$-stopping time $\tau$
satisfying
\begin{equation}
\label{eq:mst70}
\text{either }
(B_{t\wedge\tau}^-)_{t\geq0}
\text{ or }
(B_{t\wedge\tau}^+)_{t\geq0}
\text{ is uniformly integrable}
\end{equation}
is finite and minimal for~$B$.
We now recall that,
by Theorem~3 in~\cite{CoxHobson:06},
under the assumption $\EE|B_\tau|<\infty$,
\eqref{eq:mst70} is, in fact, equivalent
to the minimality of~$\tau$.
(Let us also notice that
\eqref{eq:mst70} implies that
$B^\tau$ is a closed super- or submartingale,
hence $\EE|B_\tau|<\infty$.)
Thus, for a Brownian motion,
sufficient condition~\eqref{eq:mst70}
that we get from
Theorem~\ref{th:mst1}
turns out to be necessary and sufficient
(under the assumption $\EE|B_\tau|<\infty$).
\end{remark}

In Examples~\ref{ex:mst3}
and~\ref{ex:mst4} below,
$X$ will be a $d$-dimensional
$(\cF_t)$-Brownian motion
starting from $x_0\in\bbR^d$,
$d\geq2$,
on some filtered probability space.
The state space will be
$E:=\bbR^d\cup\{\infty\}$.
By $|\cdot|$ we denote
the Euclidean norm on $\bbR^d$.
It is well-known that, if $d>2$,
then $\lim_{t\to\infty}X_t=\infty$~a.s.,
while if $d=2$, then $X$ is recurrent.
Let us also recall that, for all $d\geq2$,
every one-point set in $\bbR^d$
is polar for~$X$.

\begin{example}[BM$^d$, $d>2$,
which is transient]
\label{ex:mst3}
Let $d>2$. Take $y\in\bbR^d$, $y\neq x_0$,
and set $g(x)=|x-y|^{2-d}$, $x\in E$.
By It\^o's formula,
$g(X)$ is a positive local martingale,
hence a closed supermartingale.
It has a strictly increasing quadratic variation,
hence no intervals of constancy.
Theorem~\ref{th:mst1} implies that
$$
\text{every }
(\cF_t)
\text{-stopping time }
\tau
\text{ is minimal for }
X.
$$
\end{example}

\begin{example}[BM$^2$,
which is recurrent]
\label{ex:mst4}
For $d=2$, due to recurrence of~$X$,
minimality is well-defined only for finite
$(\cF_t)$-stopping times.
Take $z\in\bbR^2$, $z\neq x_0$,
and set $g_z(x)=\log|x-z|$, $x\in E$.
Let us define the process
$Y_t=g_z(X_t)$, $t\geq0$.
By It\^o's formula and L\'evy's
characterisation theorem,
the process $Y$ satisfies
SDE~\eqref{eq:mst35}
with $\ka(x)=e^{-x}$ (and $y_0=g_z(x_0)$),
in particular, $Y$ is a local martingale.
Here, $\ka$ satisfies~\eqref{eq:mst40}
but not~\eqref{eq:mst50},
which means that $Y$ is not a martingale.
Denoting $X=(X^1,X^2)$ and $z=(z^1,z^2)$,
we have $Y=\log|X-z|\leq|X-z|\leq|X^1-z^1|+|X^2-z^2|$,
hence $\EE\sup_{s\leq t}Y^+_s<\infty$
for all $t\in[0,\infty)$.
Therefore, $Y$ is a submartingale.
Now Theorem~\ref{th:mst1}
with $g$ being $-g_z$
and Lemma~\ref{lem:mst1}
imply that any $(\cF_t)$-stopping time $\tau$
satisfying
\begin{equation}
\label{eq:mst80}
\left((\log|X_{t\wedge\tau}-z|)^+\right)_{t\geq0}
\text{ is uniformly integrable}
\end{equation}
is finite and minimal for~$X$
(again, finiteness of $\tau$ follows
from~\eqref{eq:mst80}
because the closed submartingale
$Y^\tau$ converges~a.s.).
Furthermore, Theorem~\ref{th:mst1}
with $g$ being $g_z$ implies
that any $(\cF_t)$-stopping time $\tau$
satisfying
\begin{equation}
\label{eq:mst90}
\EE\sup_{t\geq0}
(\log|X_{t\wedge\tau}-z|)^-
<\infty
\end{equation}
is finite and minimal for~$X$.
Summarising, for a two-dimensional
$(\cF_t)$-Brownian motion~$X$
starting from $x_0\in\bbR^2$,
any $(\cF_t)$-stopping time $\tau$
satisfying either~\eqref{eq:mst80}
or~\eqref{eq:mst90}
with some $z\in\bbR^2$, $z\neq x_0$,
is finite and minimal for~$X$.
\end{example}

\bigskip\noindent
\textbf{Acknowledgement.}
Alexander Gushchin
was partially supported
by the International La\-bo\-ra\-tory
of Quantitative Finance,
National Research University
Higher School of Economics,
Russian Federation Government grant,
N.~14.A12.31.0007.

\bibliographystyle{abbrv}
\bibliography{refs}

\begin{thebibliography}{10}

\bibitem{AcciaioBeiglbockPenknerSchachermayerTemme:13}
B.~Acciaio, M.~Beiglb{\"o}ck, F.~Penkner, W.~Schachermayer, and J.~Temme.
\newblock A trajectorial interpretation of {D}oob's martingale inequalities.
\newblock {\em Ann. Appl. Probab.}, 23(4):1494--1505, 2013.

\bibitem{AneGeman:00}
T.~An\'e and H.~Geman.
\newblock Order flow, transaction clock, and normality of asset returns.
\newblock {\em The Journal of Finance}, 55(5):2259--2284, 2000.

\bibitem{AnkirchnerHeyneImkeller:08}
S.~Ankirchner, G.~Heyne, and P.~Imkeller.
\newblock A {BSDE} approach to the {S}korokhod embedding problem for the
  {B}rownian motion with drift.
\newblock {\em Stoch. Dyn.}, 8(1):35--46, 2008.

\bibitem{AnkirchnerHobsonStrack:14}
S.~Ankirchner, D.~G. Hobson, and P.~Strack.
\newblock Finite, integrable and bounded time embeddings for diffusions.
\newblock {\em To appear in Bernoulli}, 2014.

\bibitem{AnkirchnerStrack:11}
S.~Ankirchner and P.~Strack.
\newblock Skorokhod embeddings in bounded time.
\newblock {\em Stoch. Dyn.}, 11(2-3):215--226, 2011.

\bibitem{Barndorff-NielsenShiryaev:10}
O.~E. Barndorff-Nielsen and A.~Shiryaev.
\newblock {\em Change of time and change of measure}.
\newblock Advanced Series on Statistical Science \& Applied Probability, 13.
  World Scientific Publishing Co. Pte. Ltd., Hackensack, NJ, 2010.

\bibitem{BaxterChacon:77}
J.~R. Baxter and R.~V. Chacon.
\newblock Enlargement of {$\sigma $}-algebras and compactness of time changes.
\newblock {\em Canad. J. Math.}, 29(5):1055--1065, 1977.

\bibitem{BeiglbockHenry-LaborderePenkner:13}
M.~Beiglb{\"o}ck, P.~Henry-Labord{\`e}re, and F.~Penkner.
\newblock Model-independent bounds for option prices---a mass transport
  approach.
\newblock {\em Finance Stoch.}, 17(3):477--501, 2013.

\bibitem{Birnbaum:61}
A.~Birnbaum.
\newblock On the foundations of statistical inference: binary experiments.
\newblock {\em Ann. Math. Statist.}, 32:414--435, 1961.

\bibitem{BrownHobsonRogers:01}
H.~Brown, D.~G. Hobson, and L.~C.~G. Rogers.
\newblock Robust hedging of barrier options.
\newblock {\em Math. Finance}, 11(3):285--314, 2001.

\bibitem{Cox:08}
A.~M.~G. Cox.
\newblock Extending {C}hacon-{W}alsh: minimality and generalised starting
  distributions.
\newblock In {\em S\'eminaire de probabilit\'es {XLI}}, volume 1934 of {\em
  Lecture Notes in Math.}, pages 233--264. Springer, Berlin, 2008.

\bibitem{CoxHobson:04}
A.~M.~G. Cox and D.~G. Hobson.
\newblock An optimal {S}korokhod embedding for diffusions.
\newblock {\em Stochastic Process. Appl.}, 111(1):17--39, 2004.

\bibitem{CoxHobson:06}
A.~M.~G. Cox and D.~G. Hobson.
\newblock Skorokhod embeddings, minimality and non-centred target
  distributions.
\newblock {\em Probab. Theory Related Fields}, 135(3):395--414, 2006.

\bibitem{CoxHobsonObloj:08}
A.~M.~G. Cox, D.~G. Hobson, and J.~Ob{\l}{\'o}j.
\newblock Pathwise inequalities for local time: applications to {S}korokhod
  embeddings and optimal stopping.
\newblock {\em Ann. Appl. Probab.}, 18(5):1870--1896, 2008.

\bibitem{CoxObloj:11a}
A.~M.~G. Cox and J.~Ob{\l}{\'o}j.
\newblock Robust hedging of double touch barrier options.
\newblock {\em SIAM J. Financial Math.}, 2(1):141--182, 2011.

\bibitem{CoxObloj:11}
A.~M.~G. Cox and J.~Ob{\l}{\'o}j.
\newblock Robust pricing and hedging of double no-touch options.
\newblock {\em Finance Stoch.}, 15(3):573--605, 2011.

\bibitem{CoxWang:13}
A.~M.~G. Cox and J.~Wang.
\newblock Root's barrier: construction, optimality and applications to variance
  options.
\newblock {\em Ann. Appl. Probab.}, 23(3):859--894, 2013.

\bibitem{DolinskySoner:14}
Y.~Dolinsky and H.~M. Soner.
\newblock Martingale optimal transport and robust hedging in continuous time.
\newblock {\em To appear in Probab. Theory Related Fields}, 2014.

\bibitem{GranditsFalkner:00}
P.~Grandits and N.~Falkner.
\newblock Embedding in {B}rownian motion with drift and the {A}z\'ema-{Y}or
  construction.
\newblock {\em Stochastic Process. Appl.}, 85(2):249--254, 2000.

\bibitem{Hall:69}
W.~J. Hall.
\newblock Embedding submartingales in {W}iener processes with drift, with
  applications to sequential analysis.
\newblock {\em J. Appl. Probability}, 6:612--632, 1969.

\bibitem{Hobson:98a}
D.~G. Hobson.
\newblock Robust hedging of the lookback option.
\newblock {\em Finance Stoch.}, 2(4):329--347, 1998.

\bibitem{Hobson:11}
D.~G. Hobson.
\newblock The {S}korokhod embedding problem and model-independent bounds for
  option prices.
\newblock In {\em Paris-{P}rinceton {L}ectures on {M}athematical {F}inance
  2010}, volume 2003 of {\em Lecture Notes in Math.}, pages 267--318. Springer,
  Berlin, 2011.

\bibitem{KaratzasKardaras:07}
I.~Karatzas and C.~Kardaras.
\newblock The num\'eraire portfolio in semimartingale financial models.
\newblock {\em Finance Stoch.}, 11(4):447--493, 2007.

\bibitem{KaratzasShreve:91}
I.~Karatzas and S.~E. Shreve.
\newblock {\em Brownian {M}otion and {S}tochastic {C}alculus}, volume 113 of
  {\em Graduate Texts in Mathematics}.
\newblock Springer-Verlag, New York, second edition, 1991.

\bibitem{Kotani:06}
S.~Kotani.
\newblock On a condition that one-dimensional diffusion processes are
  martingales.
\newblock In {\em In memoriam {P}aul-{A}ndr\'e {M}eyer: {S}\'eminaire de
  {P}robabilit\'es {XXXIX}}, volume 1874 of {\em Lecture Notes in Math.}, pages
  149--156. Springer, Berlin, 2006.

\bibitem{KramkovSchachermayer:99}
D.~Kramkov and W.~Schachermayer.
\newblock The asymptotic elasticity of utility functions and optimal investment
  in incomplete markets.
\newblock {\em Ann. Appl. Probab.}, 9(3):904--950, 1999.

\bibitem{Monroe:72}
I.~Monroe.
\newblock On embedding right continuous martingales in {B}rownian motion.
\newblock {\em Ann. Math. Statist.}, 43:1293--1311, 1972.

\bibitem{Monroe:78}
I.~Monroe.
\newblock Processes that can be embedded in {B}rownian motion.
\newblock {\em Ann. Probability}, 6(1):42--56, 1978.

\bibitem{Obloj:04}
J.~Ob{\l}{\'o}j.
\newblock The {S}korokhod embedding problem and its offspring.
\newblock {\em Probab. Surv.}, 1:321--390, 2004.

\bibitem{Obloj:12}
J.~Ob{\l}{\'o}j.
\newblock On some aspects of the {S}korokhod {E}mbedding {P}roblem and its
  applications in {M}athematical {F}inance.
\newblock {\em Notes for the students of the 5th European Summer School in
  Financial Mathematics}, 2012.

\bibitem{OblojUlmer:12}
J.~Ob{\l}{\'o}j and F.~Ulmer.
\newblock Performance of robust hedges for digital double barrier options.
\newblock {\em Int. J. Theor. Appl. Finance}, 15(1):1250003, 34, 2012.

\bibitem{PedersenPeskir:01}
J.~L. Pedersen and G.~Peskir.
\newblock The {A}z\'ema-{Y}or embedding in non-singular diffusions.
\newblock {\em Stochastic Process. Appl.}, 96(2):305--312, 2001.

\bibitem{Peskir:00}
G.~Peskir.
\newblock The {A}z\'ema-{Y}or embedding in {B}rownian motion with drift.
\newblock In {\em High dimensional probability, {II} ({S}eattle, {WA}, 1999)},
  volume~47 of {\em Progr. Probab.}, pages 207--221. Birkh\"auser Boston,
  Boston, MA, 2000.

\bibitem{RevuzYor:99}
D.~Revuz and M.~Yor.
\newblock {\em Continuous {M}artingales and {B}rownian {M}otion}, volume 293 of
  {\em Grundlehren der Mathematischen Wissenschaften}.
\newblock Springer-Verlag, Berlin, third edition, 1999.

\bibitem{Skorokhod:61}
A.~V. Skorohod.
\newblock {\em Issledovaniya po teorii sluchainykh protsessov
  ({S}tokhasticheskie differentsialnye uravneniya i predelnye teoremy dlya
  protsessov {M}arkova)}.
\newblock Izdat. Kiev. Univ., Kiev, 1961.

\bibitem{Skorokhod:65}
A.~V. Skorokhod.
\newblock {\em Studies in the theory of random processes}.
\newblock Translated from the Russian by Scripta Technica, Inc. Addison-Wesley
  Publishing Co., Inc., Reading, Mass., 1965.

\end{thebibliography}
\end{document}